\newcommand*\circled[1]{\tikz[baseline=(char.base)]{
            \node[shape=circle,draw,inner sep=2pt] (char) {#1};}}
\newtheorem{theorem}{Theorem}
\newtheorem{lemma}[theorem]{Lemma}
\newtheorem{corollary}[theorem]{Corollary}
\newtheorem*{theorem*}{Theorem}
\theoremstyle{definition}
\numberwithin{theorem}{section}
\numberwithin{equation}{section}
\newcommand{\F}{\mathbf{F}}
\newcommand{\Z}{\mathbf{Z}}
\renewcommand{\P}{\mathbf{P}}
\renewcommand{\S}{\mathfrak{S}}
\renewcommand{\epsilon}{\varepsilon}
\newcommand{\eps}{\epsilon}
\renewcommand{\hat}{\widehat}
\renewcommand{\(}{\left(}
\renewcommand{\)}{\right)}
\newcommand{\tx}[1]{{\textstyle #1}}
\newcommand{\sump}{\sideset{}{'}\sum}
\newcommand{\floor}[1]{\lfloor#1\rfloor}
\newcommand{\norm}[1]{\left\|#1\right\|}
\newcommand{\tr}{\text{tr}}
\renewcommand{\Sigma}[1]{\tx\sum #1}
\begin{document}

\title{More on additive triples of bijections}



\author{Sean Eberhard}
\address{Sean Eberhard, London, UK}
\email{eberhard.math@gmail.com}

\begin{abstract}
We study additive properties of the set $S$ of bijections (or permutations) $\{1,\dots,n\}\to G$, thought of as a subset of $G^n$, where $G$ is an arbitrary abelian group of order $n$. Our main result is an asymptotic for the number of solutions to $\pi_1 + \pi_2 + \pi_3 = f$ with $\pi_1,\pi_2,\pi_3\in S$, where $f:\{1,\dots,n\}\to G$ is an arbitary function satisfying $\sum_{i=1}^n f(i) = \sum G$. This extends recent work of Manners, Mrazovi{\'c}, and the author~\cite{EMM}. Using the same method we also prove a less interesting asymptotic for solutions to $\pi_1 + \pi_2 + \pi_3 + \pi_4 = f$, and we also show that the distribution $\pi_1+\pi_2$ is close to flat in $L^2$.

As in \cite{EMM}, our method is based on Fourier analysis, and we prove our results by carefully carving up $\hat{G}^n$ and bounding various character sums. This is most complicated when $G$ has even order, say when $G = \F_2^d$.

At the end of the paper we explain two applications, one coming from the Latin squares literature (counting transversals in Latin hypercubes) and one from cryptography (PRP-to-PRF conversion).
\end{abstract}

\maketitle

\section{Introduction}

Let $G$ be an abelian group of order $n$, and let $S$ be the set of bijections $\pi:\{1,\dots,n\}\to G$. We are interested in additive properties of $S$ as a subset of $G^n$. For example, we are interested in counting solutions to
\[
  \pi_1 + \pi_2 = \pi_3
  \qquad(\pi_1,\pi_2,\pi_3\in S).
\]
The solutions to this equation are called \emph{additive triples} (or \emph{Schur triples}). The following estimate was the main theorem of \cite{EMM}.

\begin{theorem}[Main theorem of \cite{EMM}]\label{main-EMM}
Assume $n$ is odd. Then the the number of additive triples in $S$ is
\[
  (e^{-1/2} + o(1)) n!^3/n^{n-1}.
\]
\end{theorem}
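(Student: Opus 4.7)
The plan is to apply Fourier analysis on the abelian group $G^n$. Writing the indicator of $\pi_1+\pi_2-\pi_3 = 0$ as the usual character average over $\hat G^n$ gives
\[
N := \#\{(\pi_1,\pi_2,\pi_3) \in S^3 : \pi_1+\pi_2=\pi_3\} = \frac{1}{n^n} \sum_{\chi \in \hat G^n} |\hat S(\chi)|^2\, \hat S(\chi),
\]
where $\hat S(\chi) := \sum_{\pi \in S}\prod_i \chi_i(\pi(i))$ is the permanent of the $n\times n$ matrix $(\chi_i(g))_{i\in[n],\,g\in G}$. Because $n$ is odd, $\sum_{g\in G} g = 0$, so $\hat S(\chi)$ is invariant under multiplying every component of $\chi$ by a fixed character $\chi^*\in\hat G$. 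This $n$-fold symmetry lets us restrict to a fundamental domain such as $\{\psi \in \hat G^n : \psi_1 = 1\}$, giving
\[
N = \frac{1}{n^{n-1}} \sum_{\psi:\,\psi_1=1} |\hat S(\psi)|^2\, \hat S(\psi).
\]

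The next step is to stratify this sum by $\mathrm{supp}(\psi) := \{i : \psi_i \ne 1\}$ and isolate the near-diagonal main term. An empty support gives $\hat S(\psi) = n!$ and contributes exactly $n!^3/n^{n-1}$. Singleton supports vanish by character orthogonality. For a support of size $2k$ whose nontrivial characters pair up into inverses (so $\psi_{i_{2\ell-1}}\psi_{i_{2\ell}} = 1$ for $\ell=1,\dots,k$), expanding the permanent and using inclusion--exclusion to handle coincidences of arguments yields
\[
\hat S(\psi) = (-n)^k (n-2k)! \bigl(1 + O(1/n)\bigr) = (-1)^k\, n!\, n^{-k}(1+o(1)).
\]
The number of such matched $\psi$ in the fundamental domain is $\binom{n-1}{2k}\frac{(2k)!}{2^k k!}(n-1)^k$, and each contributes $\sim n!^3\,(-1)^k n^{-3k}$ to $\sum|\hat S|^2\hat S$. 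Summing gives
\[
\frac{n!^3}{n^{n-1}} \sum_{k\ge 0} \binom{n-1}{2k}\frac{(2k)!}{2^k k!}(n-1)^k \frac{(-1)^k}{n^{3k}} \sim \frac{n!^3}{n^{n-1}} \sum_{k \ge 0} \frac{(-1)^k}{2^k k!} = e^{-1/2}\,\frac{n!^3}{n^{n-1}},
\]
the claimed leading term.

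The main obstacle is controlling the contribution of all other $\psi$: those with large support, or whose nontrivial components fail to pair into inverses. Here one needs bounds of the form $|\hat S(\psi)| \le n!\,\rho(\psi)$ with $\rho(\psi)$ decaying fast enough in $|\mathrm{supp}(\psi)|$ and in the degree of mismatch to beat the combinatorial count of such $\psi$. Proving permanent estimates of this quality---essentially singular-value bounds for submatrices of the character table of $G$---is the technical heart of the argument. Fortunately, since $n$ is odd, $G$ has no $2$-torsion and one sidesteps the subtle complications arising from characters of order $2$ that make the even-order case much harder. Combining the near-diagonal main term with these error bounds yields the stated asymptotic.
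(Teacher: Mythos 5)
Your setup and main-term computation are sound and essentially reproduce the paper's (i.e.\ \cite{EMM}'s) approach: Fourier expansion over $\hat G^n$, the shift symmetry coming from $\Sigma G=0$ when $n$ is odd, and the extraction of the singular series $e^{-1/2}=\sum_k(-1)^k/(2^kk!)$ from sparse characters whose nontrivial coordinates pair off into inverses (your value $\hat S(\psi)\approx(-1)^k n!\,n^{-k}$ matches the paper's estimate \eqref{m/2-parts}). But the proof as written has a genuine gap exactly where you flag it: everything outside the paired sparse characters is dispatched by an unproved assertion that $|\hat S(\psi)|\le n!\,\rho(\psi)$ with $\rho$ decaying fast enough to beat the count of such $\psi$. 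That is not a routine verification --- it is the entire content of the theorem. In the paper's scheme it corresponds to three separate nontrivial inputs: an $L^\infty$ bound of shape $2^{-m/2}\binom{n}{m}^{-1/2}n!/n^n$ for $m$-sparse characters (Theorem~\ref{L-infty-bound}, where oddness of $n$ is really used), an $L^2$ ``Sparseval'' bound (Theorem~\ref{sparseval}), and a square-root cancellation bound (Theorem~\ref{square-root-cancellation}) combined with an entropy count of characters to kill the high-entropy range. None of these is sketched, so the error analysis is missing rather than merely compressed.

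A secondary but substantive point: your proposed error bound, with $\rho(\psi)$ decaying in $|\mathrm{supp}(\psi)|$ after restricting to the fundamental domain $\{\psi_1=1\}$, is false as stated. A character with $\psi_1=1$ and almost all other coordinates equal to a fixed nontrivial $\chi_0$ has support of size about $n$, yet by shift invariance its transform equals that of a very sparse character and can be as large as roughly $n!\,n^{-O(1)}$ (normalized). So the correct complexity measure must be shift-invariant --- the paper uses the entropy $H(\chi)$ of the multiset of coordinates, or equivalently sparseness \emph{after} shifting the majority coordinate to $0$ --- and your stratification by raw support would have to be reorganized accordingly before any decay bound could hold. (This also means your fundamental-domain count of main-term characters silently omits the shifted representatives whose support contains coordinate $1$; that loss is only a relative $O(k/n)$ per term, so the main term survives, but it illustrates why the bookkeeping should be done orbitwise as in the paper.)
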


The odd-order hypothesis here is irritating. In \cite{EMM} our main interest was the cyclic group $G = \Z/n\Z$, and in that case the restriction to odd $n$ is natural, since otherwise there aren't any solutions to $\pi_1 + \pi_2 = \pi_3$, but in general it is a shortcoming. It is a little sad not to have an asymptotic in the case $G = \F_2^d$, for example. Here, among other things, we make amends by proving the following theorem.

\begin{theorem}\label{main-even}
Assume $\Sigma G = 0$. Then the number of additive triples in $S$ is
\[
  (e^{-1/2}+o(1))n!^3/n^{n-1}.
\]
\end{theorem}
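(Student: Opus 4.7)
Following the Fourier-analytic strategy of \cite{EMM}, my approach starts by expanding the count of additive triples using orthogonality of characters on $G^n$. Writing $S \subset G^n$ for the set of bijections, this gives
\[
N = \frac{1}{n^n} \sum_{\chi \in \hat G^n} F(\chi)^2 \overline{F(\chi)},
\qquad F(\chi) := \sum_{\pi \in S} \chi(\pi).
\]
After fixing an enumeration $g_1,\dots,g_n$ of $G$, the quantity $F(\chi)$ is the $n\times n$ permanent of the matrix $\bigl(\chi_i(g_j)\bigr)_{i,j}$. The hypothesis $\Sigma G = 0$ is exactly what makes the diagonal $\{\chi_i=\chi_0 \text{ for all }i\}$ coherent: one computes $F(\chi_0,\dots,\chi_0) = n!\,\chi_0(\Sigma G) = n!$ for every $\chi_0\in\hat G$, and these $n$ terms alone contribute $n!^3/n^{n-1}$ to $N$.

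The bulk of the work is to partition $\hat G^n$ by the ``type'' of each coordinate, the key trichotomy being whether $\chi_i$ is trivial, non-trivial of order $2$ (i.e.\ in $\hat G[2]\setminus\{1\}$), or of order $\geq 3$, and to prove a suitable bound on $|F(\chi)|$ in each region. The main term, with its constant $e^{-1/2}$, should emerge from those $\chi$ that are ``almost constant'' (nearly all coordinates equal to some fixed $\chi_0\in\hat G$), via a Poisson-type combinatorial calculation parallel in spirit to the one in \cite{EMM} but now summed over the basepoint $\chi_0$ as well. The fact that the diagonal alone overshoots the answer by a factor of $e^{1/2}$ means the Poisson expansion must give a genuine cancellation, not merely a main-versus-error decomposition.

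The chief new obstacle, compared to \cite{EMM}, is the contribution of $\chi$ with many coordinates in $\hat G[2]\setminus\{1\}$: the corresponding rows of the matrix are balanced $\pm1$ vectors, and the decay estimates used in \cite{EMM} rely on the fact that characters of odd order force significant cancellation row-by-row, which degrades considerably for characters of order $2$. When $G$ has odd order no such characters exist, which is why \cite{EMM} could avoid this case. My plan is to split $G = G_2 \oplus G_{\mathrm{odd}}$ into its $2$-Sylow and odd-order parts and to factor the matrix accordingly: the $G_{\mathrm{odd}}$-part should be amenable to the machinery of \cite{EMM}, while the $G_2$-part reduces to an analysis of permanents of $\pm1$ matrices arising from characters of an elementary abelian $2$-group. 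Here I expect to combine $L^2$-type (or higher moment) bounds on such permanents with the cancellation supplied by $\Sigma G_2 = 0$ to obtain the needed decay. The hardest single case, as the paper's introduction warns, is $G = \F_2^d$, where $\hat G[2] = \hat G$ and the reduction above is vacuous; that case has to be treated by direct estimates on $\pm1$ permanents, and I expect this to be the most delicate part of the argument.
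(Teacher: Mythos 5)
Your setup is sound as far as it goes: the Fourier expansion of the triple count, the computation of the diagonal contribution $n\cdot n!^3/n^n$ under $\Sigma G=0$, and the recognition that the $e^{-1/2}$ must arise from an alternating (Poisson-type) expansion over nearly-constant characters are all in line with how the proof actually runs (here this is Theorem~\ref{major-arcs-replacement} with $f=0$, after using the shift-invariance \eqref{shift-invariance} to reduce low-entropy characters to sparse ones). But the heart of the theorem --- a usable bound on $|\hat{1_S}(\chi)|$ in the presence of order-$2$ characters --- is exactly the part you leave as a statement of intent, and the route you sketch for it has two concrete problems. First, the proposed reduction ``split $G=G_2\oplus G_{\mathrm{odd}}$ and factor the matrix accordingly'' does not get off the ground: a bijection $\{1,\dots,n\}\to G_2\oplus G_{\mathrm{odd}}$ is not a pair of bijections onto the factors, so neither $F(\chi)$ nor the permanent factors along this decomposition, and no odd-order machinery can be quoted for ``the $G_{\mathrm{odd}}$-part''. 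In any case the case $G=\F_2^d$, which you defer to ``direct estimates on $\pm1$ permanents'', is precisely the whole difficulty, and no argument is offered for it.

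Second, and more importantly, the kind of estimate you say you expect to prove is false: you hope for ``needed decay'' from cancellation supplied by $\Sigma G_2=0$, but for $\chi=(\chi_0^m,0^{n-m})$ with $2\chi_0=0$ one has genuinely $|\hat{1_S}(\chi)|\approx\binom{n}{m}^{-1/2}n!/n^n$, i.e.\ \emph{no} exponential improvement over the elementary square-root bound (Lemma~\ref{elementary-bound}), so no pointwise decay estimate can handle these characters. The actual resolution is a dichotomy: Theorem~\ref{new-theorem} gives the exponential gain $(1-\delta)^{m/2}$ provided no nonzero $2$-torsion coordinate is repeated more than $(1-\delta)m$ times, while the exceptional characters are so few (at most $\binom{n}{m}2^m n^{\delta m+1}$) that the elementary bound alone suffices for their contribution to the cube sum (Corollary~\ref{low-entropy}); this in turn only works for $m\lesssim n/(\log n)^2$, which forces the separate wider-range high-entropy estimate of Theorem~\ref{med-to-high-entropy}. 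None of these ingredients --- the repetition dichotomy, the counting of exceptional characters, the extended high-entropy bound --- appears in your plan, so as it stands the proposal identifies the obstacle but does not overcome it.
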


Here and throughout the paper we write simply $\Sigma G$ for $\sum_{x\in G} x$, which is a characteristic element of $G$. It is easy to see that $\Sigma G$ is equal to the unique element of order $2$ in $G$ if there is one, and otherwise zero. Thus $\Sigma G \neq 0$ if and only if $G$ has the form $\Z/m\Z \times H$ with $m$ even and $|H|$ odd. In these groups there are no additive triples in $S$ (since $\pi_1 + \pi_2 = \pi_3$ implies $2\Sigma G = \Sigma G$), and Theorem~\ref{main-even} covers all other groups.

Generalizing the problem, suppose we count solutions to
\[
  \pi_1 + \pi_2 + \pi_3 = f,
\]
where $f:\{1,\dots,n\}\to G$ is a fixed but arbitrary function. (The previous problem is the case $f=0$, since $\pi$ is a bijection if and only if $-\pi$ is a bijection.) Call such solutions \emph{additive $f$-triples}. The natural hypothesis about $f$ and $G$ is that $\sum_{i=1}^n f(i) = \Sigma G$. Our main theorem is the following generalization of Theorem~\ref{main-even}.

\begin{theorem}\label{main-f}
Let $f:\{1,\dots,n\}\to G$ be a function satisfying $\sum_{i=1}^n f(i) = \Sigma G$. Then the number of additive $f$-triples in $S$ is
\[
  (\S(f) + o(1)) n!^3/n^{n-1},
\]
where
\[
  \S(f) = \exp\( - \frac1{2n^2} \sum_{x\in G} |f^{-1}(x)|^2\).
\]
\end{theorem}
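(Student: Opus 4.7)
Following \cite{EMM}, we proceed by Fourier analysis. Fourier inversion on $\hat G^n$ gives
\[
N(f) = \frac{1}{n^n} \sum_{\chi \in \hat G^n} \overline{\chi(f)}\, \mathrm{perm}(M_\chi)^3,
\]
where $(M_\chi)_{i,x} = \chi_i(x)$ and $\mathrm{perm}(M_\chi) = \sum_{\pi \in S} \prod_i \chi_i(\pi(i))$. A useful first step is to exploit diagonal invariance: for $\xi \in \hat G$, the substitution $\chi \mapsto \chi + \xi \cdot \mathbf 1$ multiplies the permanent by $\xi(\Sigma G)^3$ and the phase $\overline{\chi(f)}$ by $\overline{\xi(\sum_i f(i))} = \overline{\xi(\Sigma G)}$, giving a net factor $\xi(2\Sigma G) = 1$ since $\Sigma G$ has order at most $2$. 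This invariance uses both $\sum_i \pi(i) = \Sigma G$ for $\pi \in S$ and the hypothesis $\sum_i f(i) = \Sigma G$, and lets us fix $\chi_1 = 0$ and absorb a factor of $n$:
\[
N(f) = \frac{1}{n^{n-1}} \sum_{\chi\,:\,\chi_1 = 0} \overline{\chi(f)}\, \mathrm{perm}(M_\chi)^3.
\]
The trivial $\chi = 0$ term already contributes exactly $n!^3/n^{n-1}$, so what remains is to show the nontrivial terms sum to $(\S(f) - 1 + o(1))\, n!^3/n^{n-1}$.

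The main contribution from nontrivial $\chi$ comes from those with small support $I = \{i : \chi_i \neq 0\}$ admitting a perfect pairing, that is, $|I| = 2k$ with a partition of $I$ into pairs $\{i,j\}$ satisfying $\chi_i + \chi_j = 0$. Expanding $\mathrm{perm}(M_\chi) = (n-2k)! \sum_{\tau: I \hookrightarrow G} \prod_{i \in I} \chi_i(\tau(i))$ by inclusion--exclusion on the equality pattern of $\tau$, and using orthogonality of nontrivial characters, one sees the dominant contribution is $(-n)^k(n-2k)!$. Writing pair characters as $\chi_{i_a} = \xi_a$ and $\chi_{j_a} = -\xi_a$, the Fourier phase factors as $\prod_a \xi_a(f(j_a) - f(i_a))$, and summing each $\xi_a$ over $\hat G \setminus \{0\}$ yields the per-pair weight $n \cdot 1[f(i_a) = f(j_a)] - 1$. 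Summing over positions and matchings of size $k$ --- an exponential-generating-function computation in which monochromatic sub-matchings are counted via $\prod_x e^{t|f^{-1}(x)|^2/2}$ --- and then over $k$, reconstructs $\S(f) = \exp\bigl(-\tfrac{1}{2n^2}\sum_x |f^{-1}(x)|^2\bigr)$ to leading order.

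The bulk of the remaining work is error control. Contributions from $\chi$ with large support (say $|I| \geq (\log n)^C$) are bounded by character-sum estimates on $|\mathrm{perm}(M_\chi)|$, largely importable from \cite{EMM}; contributions from small $|I|$ with non-pairing partitions in the inclusion--exclusion expansion are genuinely lower-order and can be estimated directly. The main obstacle, as flagged in the abstract, is the even-order case: when $\hat G$ has order-$2$ elements, pairs $\chi_i + \chi_j = 0$ admit self-pairings $\chi_i = \chi_j$ with $2\chi_i = 0$, and many more partitions of $I$ satisfy the zero-block-sum condition $\sum_{i \in B} \chi_i = 0$ that survives character orthogonality (since $c \cdot \chi_i = 0$ whenever $c$ is even). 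I expect to handle this by stratifying $\hat G^n$ according to the order-$2$ locus $\{i : 2\chi_i = 0\}$, evaluating the permanent on that stratum separately --- where the hypothesis $\sum_i f(i) = \Sigma G$ again produces the necessary cancellations --- and merging the two contributions cleanly into $\S(f)$.
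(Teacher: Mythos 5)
Your Fourier set-up and major-arc computation are essentially the paper's: the permanent identity is just $\mathrm{perm}(M_\chi)=n^n\hat{1_S}(\chi)$, the diagonal-shift invariance using $\sum_i f(i)=\Sigma G$ and $2\Sigma G=0$ is the same reduction, and your pair-expansion with per-pair weight $n\cdot 1[f(i)=f(j)]-1$, summed over matchings, reproduces the twisted major-arcs estimate (Theorem~\ref{major-arcs-replacement}) and hence $\S(f)$. The non-pairing small-support terms are indeed lower order, as in Lemma~\ref{lem:Xm}. So the main-term half of your plan is sound and matches the paper.

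The gap is in your error control, and it is exactly the part that constitutes the actual content of this paper. You say the large-support contributions are ``largely importable from \cite{EMM},'' but the relevant EMM input, the sparse $L^\infty$ bound (Theorem~\ref{L-infty-bound}), genuinely requires odd order: for $\chi=(\chi_0^m,0^{n-m})$ with $2\chi_0=0$ one has $|\hat{1_S}(\chi)|\approx\binom{n}{m}^{-1/2}n!/n^n$, so the $2^{-m/2}$ gain that EMM relies on simply does not exist, and no cancellation is produced by the hypothesis on $f$ either (these are single unimodular phases $\chi(f)$ times a large Fourier coefficient). Your proposed remedy --- stratify by the order-$2$ locus and ``merge the two contributions cleanly into $\S(f)$'' --- misreads the difficulty: the $2$-torsion-concentrated characters do not supply an extra main term to be merged; they are approximate equality cases of the trivial bound for which one must instead (i) prove a new pointwise bound with gain $(1-\delta)^{m/2}$ whenever no nonzero $2$-torsion coordinate occupies more than a $(1-\delta)$-fraction of the support (the recursion/operator-norm argument of Theorem~\ref{new-theorem}), (ii) count the exceptional, nearly-constant-$2$-torsion characters and beat that count with the trivial bound by choosing $\delta\asymp\log(n/m)/\log n$ (Corollary~\ref{low-entropy}), and (iii) notice that this only works for $m\lesssim n/(\log n)^2$, which forces the high-entropy estimate to be pushed down from constant entropy to entropy $(\log n)^{-100}$ via new counting lemmas (Section~\ref{sec:high-entropy}). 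None of these steps is sketched in your proposal, and without them the sum over medium-sparsity characters dominated by a $2$-torsion value is not controlled, so as written the argument does not close.
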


By analogy with analytic number theory, we might call $\S(f)$ the \emph{singular series} associated with $f$. The expression inside the brackets is closely related to the so-called \emph{collision entropy} (or \emph{R{\'e}nyi entropy}) of $f$, defined by
\[
  H_2(f) = -\log\sum_{x\in G}\P(f = x)^2.
\]
Explicitly,
\[
  \S(f) = \exp\(-\frac12 e^{-H_2(f)}\).
\]
The value of $\S(f)$ can be as small as $e^{-1/2}$ (when $f$ is constant), but generically $\S(f) = 1 + o(1)$. Thus we can describe the distribution of $\pi_1 + \pi_2 + \pi_3$ as pretty close to uniform on the coset $\sum_{i=1}^n f(i) = \Sigma G$, but with a slight aversion to functions with high collision entropy.

If we have four or more $\pi$ summands, then distribution becomes asymptotically flat. In fact this follows already from Theorem~\ref{main-f}, but by directly applying our method we will prove the following quantitative version of this assertion.

\begin{theorem}\label{thm:main-4}
Let $d\geq4$ be an integer, and let $f:\{1,\dots,n\}\to G$ a function satisfying $\sum_{i=1}^n f(i) = d \sum G$. Then the number of solutions to
\[
  \pi_1 + \cdots + \pi_d = f
  \qquad(\pi_1,\dots,\pi_d\in S)
\]
is
\[
  \(1 + O_d(n^{3-d})\) n!^d/n^{n-1}.
\]
\end{theorem}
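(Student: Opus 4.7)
The plan is to carry out the same Fourier-analytic method used for Theorems~\ref{main-EMM} and~\ref{main-f}, exploiting the larger number of factors of $F$ to obtain a quantitative error. Expanding over characters,
\[
  N = n^{-n} \sum_{\xi \in \hat G^n} \overline{\xi(f)}\, F(\xi)^d, \qquad F(\xi) := \sum_{\pi \in S} \prod_{i=1}^n \xi_i(\pi(i)).
\]
Because $\pi \mapsto \pi + c$ is a bijection of $S$ for every $c \in G$, we have $F(\xi) = (\prod_i\xi_i)(c)\,F(\xi)$, so $F(\xi) = 0$ unless $\prod_i \xi_i = 1$ in $\hat G$; similarly $F(\chi\xi) = \chi(\Sigma G)\,F(\xi)$ when $\chi$ is a constant character. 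The $n$ diagonal $\xi = (\chi,\dots,\chi)$ contribute exactly $n \cdot n!^d / n^n = n!^d/n^{n-1}$ using the hypothesis $\sum_i f(i) = d\Sigma G$; this is the main term. The task is to show that the remaining contribution is $O_d(n^{3-d}) n!^d / n^{n-1}$, or equivalently
\[
  \sum_{\text{non-diagonal }\xi}|F(\xi)|^d = O_d(n^{4-d})\, n!^d.
\]

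To do this I would stratify $\xi$ by its \emph{level} $k(\xi) := n - \max_\chi \#\{i : \xi_i = \chi\}$, the distance to the nearest diagonal. Since multiplying $\xi$ by a diagonal character preserves $|F|$, we may assume the mode is the trivial character, with $\xi_i = 1$ on a set $I_0$ of size $n - k$ and $\xi_i \neq 1$ on $I_1 := \{1,\dots,n\} \setminus I_0$. Summing out the $I_0$ coordinates of $\pi$ yields
\[
  F(\xi) = (n-k)!\,T(\xi|_{I_1}), \qquad T(\eta) := \sum_{g : I_1 \to G \text{ injective}} \prod_{i \in I_1} \eta_i(g_i),
\]
and a standard M\"obius expansion over the partition lattice of $I_1$ writes $T(\eta) = \sum_\sigma \mu(\hat 0, \sigma)\, n^{|\sigma|}$, summed over partitions $\sigma$ each of whose blocks has trivial character product. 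The coarsest partition $\{I_1\}$ always contributes because $\prod_i \xi_i = 1$, giving the ``generic'' bound $|F(\xi)| \lesssim (k-1)!\,(n-k)!\,n$.

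The count of $\xi$ at level $k$ is $O(n^{2k}/k!)$ (choose the mode, the positions $I_1$, and the $k{-}1$ free characters on $I_1$ subject to the product constraint), so in the generic regime the contribution of level $k$ to $\sum|F|^d$ is at most $O_d\!\left(n^{2k - (k-1)d}\right)n!^d$. The exponent $d - k(d-2)$ is strictly decreasing in $k$ for $d \geq 3$ and achieves its maximum $4 - d$ at the smallest admissible level $k = 2$ (note that $k = 1$ forces $\prod_i \xi_i \neq 1$, hence $F = 0$ there). Summing over $k \geq 2$ yields the required bound and hence the theorem.

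The main obstacle is the ``non-generic'' case, where some proper subset $A \subsetneq I_1$ also satisfies $\prod_{i \in A}\xi_i = 1$: extra partitions then contribute to $T$ and the generic estimate for $|T|$ can fail. Each such coincidence is a codimension-$1$ algebraic constraint, reducing the count of such $\xi$ by a factor $\sim 1/n$, while $|T|$ becomes correspondingly larger; careful bookkeeping, of the same flavour as was needed for $d = 3$ in~\cite{EMM}, keeps each level's contribution within the $n^{3-d}$ budget. For $\xi$ at very large levels $k$ where the partition-lattice expansion is unwieldy, one expects to fall back on the $L^2$ identity $\sum_\xi |F(\xi)|^2 = n^n\, n!$ (immediate from orthogonality) combined with a crude bound on $|F|$ to dispose of the tail.
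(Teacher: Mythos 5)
Your Fourier expansion and main-term computation are correct and agree with the paper's first step (the $n$ diagonal characters give $n!^d/n^{n-1}$ under $\sum_i f(i)=d\sum G$), and your level-$2$ count correctly produces the dominant error $\Theta(n^{4-d})\,n!^d$ before the $n^{-n}$ normalization. But the route you then take for the error term is not the paper's, and it leaves exactly the hard parts unproven. The paper does no level-by-level analysis at this point: it writes $\sum_{\chi\notin\hat{G}_d}|\hat{1_S}(\chi)|^d\leq \max_{\chi\notin\hat{G}_d}|\hat{1_S}(\chi)|^{d-3}\sum_{\chi}|\hat{1_S}(\chi)|^3$, bounds the max by $\binom{n}{2}^{-1/2}n!/n^n$ (Lemma~\ref{elementary-bound} plus shift-invariance), and bounds $\sum_\chi|\hat{1_S}(\chi)|^3$ by $O(n)(n!/n^n)^3$ using the machinery already built for $d=3$: the major-arc counting for small sparsity, Corollary~\ref{low-entropy} (which rests on Theorem~\ref{new-theorem} and Sparseval, Theorem~\ref{sparseval}), and the entropy estimates of Section~\ref{sec:high-entropy}. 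In other words, all the work is in that $L^3$ estimate, and your proposal must in effect reprove it.

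The two places where your sketch defers to hope are precisely where that work lives. First, the ``non-generic'' case is not a small perturbation handled by ``careful bookkeeping of the same flavour as $d=3$ in \cite{EMM}'': the theorem must cover groups with $2$-torsion such as $G=\F_2^d$, and for $\xi=(\chi_0^k,1^{n-k})$ with $2\chi_0=0$ one has $|F(\xi)|$ comparable to $\binom{n}{k}^{-1/2}n!$, which exceeds your generic bound $(k-1)!\,(n-k)!\,n$ by a factor of order $\binom{n}{k}^{1/2}k/n$, exponentially large in $k$; the tool in \cite{EMM} that controls such characters (Theorem~\ref{L-infty-bound}) requires odd order, and its replacement is Theorem~\ref{new-theorem}, the most technical part of this paper, for which you offer no substitute. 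Moreover partition-lattice bookkeeping carries $k$-dependent constants (Bell numbers, $(k-1)!$ Möbius factors), so it is only uniform for bounded or very slowly growing $k$ --- this is exactly how the paper uses its major-arc estimates, with $O_m(1)$ constants and $m\leq M\to\infty$ slowly --- and cannot by itself reach levels of size $\epsilon n$. Second, the proposed tail argument via $\sum_\xi|F(\xi)|^2=n^n\,n!$ plus ``a crude bound on $|F|$'' needs $\max_{\mathrm{tail}}|F|^{d-2}\leq n^{4-d}\,n!^{d-2}(n!/n^n)$, i.e.\ a per-character saving of roughly $e^{-n/(d-2)}$ against $n!$ on the entire range you consign to it; no crude bound supplies this in the intermediate regime, since square-root cancellation (Theorem~\ref{square-root-cancellation}) carries the prefactor $\binom{n+k-1}{k-1}^{1/2}$, which can be $2^{\Theta(n)}$ and is only tamed by the entropy/orbit-counting analysis of Section~\ref{sec:high-entropy} (Theorem~\ref{med-to-high-entropy}). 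So as written the proposal establishes the theorem only modulo the paper's minor-arc estimates, which is where essentially all the difficulty lies.
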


Finally, suppose we have just two bijections $\pi_1$ and $\pi_2$, and we consider $\pi_1+\pi_2$. In the case of just two bijections, the distribution of is very far from flat in any $L^\infty$ sense. Indeed, it follows from Theorem~\ref{main-even} and a symmetry argument that (provided $\Sigma G = 0$) the number of solutions to $\pi_1 + \pi_2 = \pi$, where $\pi$ is any fixed bijection, is $(e^{-1/2}+o(1))n!^2/n^{n-1}$, while clearly the number of solutions to $\pi_1 + \pi_2 = 0$ is $n!$. However, we can at least prove that $\pi_1 + \pi_2$ is close to uniform (on the coset $\sum_{i=1}^n f(i) = 0$) in $L^2$. The following theorem is a slight generalization.

\begin{theorem}\label{thm:L2}
Let $m<n$ be an integer, and let
\[
  \pi_1,\pi_2:\{1,\dots,m\}\to G
\]
be uniformly random injections. Then the $L^2$ distance between the distribution of $\pi_1 + \pi_2$ and the uniform distribution is $O(m/n^{3/2})$, where the $L^2$ norm is taken with respect to the uniform distribution. In other words, if $S_m$ is the set of injections then
\[
  \norm{\(\frac{n^m(n-m)!}{n!}\)^2 1_{S_m} * 1_{S_m} - 1 }_2 \leq O\(m/n^{3/2}\).
\]
Thus in particular the total variation distance between the distribution of $\pi_1 + \pi_2$ and the uniform distribution is $O(m/n^{3/2})$.
\end{theorem}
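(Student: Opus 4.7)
The plan is to mimic the Fourier-analytic method used for the preceding theorems, but now applied to the convolution square of a single injection density. Let $\rho$ be the density of the uniform distribution on $S_m$ with respect to uniform on $G^m$, so that $g := \rho * \rho$ (with the normalized convolution) is the density of $\pi_1 + \pi_2$. Writing $\hat\rho(\chi) = \E_{\pi \in S_m} \overline{\chi(\pi)}$ for $\chi \in \hat G^m$, Parseval reduces the bound to
\[
  \norm{g - 1}_2^2 = \sum_{\chi \in \hat G^m \setminus \{1\}} |\hat\rho(\chi)|^4,
\]
so the goal becomes showing $\sum_{\chi \neq 1} |\hat\rho(\chi)|^4 = O(m^2/n^3)$.

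I would organize the sum by the size $k := |\{i : \chi_i \ne 1\}|$ of the support of $\chi$. By the symmetry of $S_m$, a Fourier coefficient with $k$-element support reduces to
\[
  \psi_k(\chi_1, \dots, \chi_k) := \E_{\pi \colon [k] \to G \text{ injective}} \prod_{i=1}^k \overline{\chi_i(\pi(i))}, \quad \chi_i \in \hat G \setminus \{1\},
\]
giving
\[
  \sum_{\chi \neq 1} |\hat\rho(\chi)|^4 = \sum_{k=1}^m \binom{m}{k} T_k, \qquad T_k := \sum_{\chi \in (\hat G \setminus \{1\})^k} |\psi_k(\chi)|^4.
\]
Möbius inversion on the set-partition lattice then expresses
\[
  \psi_k(\chi) = \frac{1}{(n)_k} \sum_{\sigma \in \Pi_k^*} \mu(\hat 0, \sigma)\, n^{|\sigma|} \prod_{B \in \sigma}[\chi_B = 1],
\]
where $\Pi_k^*$ is the set of partitions of $[k]$ with no singleton block (singletons would force $\chi_i = 1$) and $\chi_B = \prod_{i \in B} \chi_i$. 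Direct computation gives $T_1 = 0$ and $\psi_2(\chi_1,\chi_2) = -[\chi_1\chi_2=1]/(n-1)$, hence $T_2 = 1/(n-1)^3$; this already accounts for the main term $\binom{m}{2} T_2 = O(m^2/n^3)$.

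The remaining task is to bound $\sum_{k \ge 3} \binom{m}{k} T_k$ by $O(m^2/n^3)$. I would expand $|\psi_k|^4$ as a fourfold sum over quadruples $(\sigma_1,\dots,\sigma_4) \in (\Pi_k^*)^4$ and swap summation, so that $T_k$ becomes a weighted count of $\chi$ satisfying all block-constraints from all four partitions. The combined constraints restrict $\chi$ to a subgroup of size $n^{k - r}$ where $r$ is the rank of the combined system, controlled by the identity $V_{\sigma_i} \cap V_{\sigma_j} = V_{\sigma_i \vee \sigma_j}$ for the block-indicator spaces in the partition lattice. The dominant contributions come from matching-diagonal quadruples $\sigma_1 = \cdots = \sigma_4 = M$ with $M$ a perfect matching of $[k]$ (forcing $k = 2l$): each contributes $\sim n^{-3l}$, and summing over the $(2l-1)!!$ matchings using $\binom{m}{2l}(2l-1)!! \le m^{2l}/(2^l l!)$ gives
\[
  \sum_{l \ge 1} \binom{m}{2l}(2l-1)!! \cdot O(n^{-3l}) \ll \sum_{l \ge 1} \frac{1}{l!}\!\left(\frac{m^2}{2 n^3}\right)^l = O(m^2/n^3)
\]
for $m \le n$. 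Quadruples involving a non-matching partition or off-diagonal tuples satisfy strictly more independent constraints on $\chi$, losing an extra factor of $n$ each time and fitting comfortably inside $O(m^2/n^3)$.

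The main obstacle is the combinatorial bookkeeping---classifying quadruples of partitions by the rank of their combined constraint system, tracking Möbius values, and verifying that the matching-diagonal terms really do dominate. The classical identity $\sum_\sigma \mu(\hat 0, \sigma) x^{|\sigma|} = (x)_k$ and elementary partition-lattice manipulations do most of the work, but some care is needed to ensure the estimates remain summable uniformly in $k$ up to $k = m$.
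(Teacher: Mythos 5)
Your setup is correct and genuinely different from the paper's route: the Parseval reduction, the decomposition $\sum_{\chi\neq1}|\hat\rho(\chi)|^4=\sum_k\binom{m}{k}T_k$, the M\"obius-over-partitions formula for $\psi_k$, and the identification of the main term $\binom{m}{2}T_2\approx m^2/(2n^3)$ are all fine. The gap is the last step, where you bound the quadruple sum class-by-class in absolute value and assert that every non-matching or off-diagonal class ``loses an extra factor of $n$''. The M\"obius weights $\prod_{B}(|B|-1)!$ destroy this once blocks are large. Concretely, take the diagonal quadruple $\sigma_1=\cdots=\sigma_4=\{[k]\}$ (a single block of size $k$): its contribution to your expansion of $T_k$ is about $((k-1)!)^4\,n^{k+3}/(n)_k^4$, where $(n)_k=n(n-1)\cdots(n-k+1)$, i.e.\ $\exp\bigl(4k\log k-3k\log n+O(k)\bigr)$, which exceeds $1$ once $k\gtrsim n^{3/4}$ and is astronomically large for $k$ near $n$ (for $k=n-1$ it is roughly $n^{\,n-6}$), while $\binom{m}{k}T_k$ must be $O(m^2/n^3)$. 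Since the M\"obius values enter to an even power this term is \emph{positive}, so it cannot be absorbed by sign considerations: the expansion only produces the right answer through massive cancellation between different quadruples, and no classification of quadruples with term-by-term absolute-value bounds, however careful the bookkeeping, can be summable uniformly in $k$ up to $k=m$ when $m$ is allowed to approach $n$ --- which is exactly the range the theorem (and the cryptographic application, $m$ up to $2^d$) requires. So the obstacle you flag at the end is not bookkeeping; the strategy as stated fails for $m\gg n^{3/4}$.

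This is the same analytic difficulty that occupies Sections~\ref{sec:low-entropy} and~\ref{sec:high-entropy} of the paper: for sparsity comparable to $n$, coefficients such as $\psi_k(\chi_0^k)$ with $2\chi_0=0$ are genuinely as large as about $\binom{n}{k}^{-1/2}$, and controlling the full sum needs the recursion \eqref{recursion} and Theorem~\ref{new-theorem}, together with Theorem~\ref{sparseval}, Theorem~\ref{square-root-cancellation} and the entropy counting behind Theorem~\ref{med-to-high-entropy}. The paper's own proof of this theorem does not redo a moment computation: it rewrites $\hat{1_{S_m}}(\chi)$ as $\hat{1_S}(\chi,0^{n-m})$ up to normalization, invokes the already-established bound $\sum_\chi|\hat{1_S}(\chi)|^3=O\bigl(n\,(n!/n^n)^3\bigr)$ to dispose of every character that is not, up to translation, at most $4$-sparse, and then computes the few sparse terms directly (your $T_2$ computation is the analogue of that last step, and matches the paper's dominant $2$-sparse contribution). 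To rescue your approach you would either have to restrict to $m\le n^{3/4-\epsilon}$, which is weaker than the statement, or resum the partition expansion so as to exhibit the cancellation --- at which point you are effectively re-deriving the recursion-based machinery.
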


At the end of the paper we give two applications. The first is to counting transversals in Latin hypercubes. Just as counting solutions to $\pi_1 + \pi_2 = \pi_3$ is equivalent to counting transversals in a certain Latin square, so counting solutions to $\pi_1 + \cdots + \pi_d = \pi_{d+1}$ is equivalent to counting transversals in a certain Latin hypercube. Thus our estimates answer some (modest) questions in the Latin squares literature.

The second application comes from cryptography, and has to do with conversion of pseudorandom permutations (PRPs) to pseudorandom functions (PRFs). A common construction is to take two pseudorandom permutations $\{0,1\}^d \to \{0,1\}^d$ and to use their bitwise xor. The security of this construction is closely related to Theorem~\ref{thm:L2} with $G = \F_2^d$. In the cryptographers' language, Theorem~\ref{thm:L2} implies that the advantage to an adversary with access to at most $m$ queries is at most $O(m/2^{3d/2})$, as long as $m<2^d$.

\section{Overview of the paper}\label{sec:recap}

As usual in additive combinatorics, our main tool for counting additive triples is the Fourier transform. We fix our conventions by giving $G$ the uniform measure and $\hat{G}$ the counting measure. Thus the Fourier transform $\hat{1_S}$ of $1_S$ is defined by
\[
  \hat{1_S}(\chi) = \frac1{n^n} \sump_{x_1,\dots,x_n} \chi_1(x_1) \cdots \chi_n(x_n)
  \qquad (\chi = (\chi_1,\dots,\chi_n) \in \hat{G}^n),
\]
where the sum is primed to indicate that only distinct $x_1,\dots,x_n\in G$ are to be considered. The number of additive $f$-triples in $S$ can then be expressed as
\[
 n^{2n} 1_S*1_S*1_S(f)
  = n^{2n} \sum_{\chi\in\hat{G}^n} \hat{1_S}(\chi)^3 \chi(f).
\]

In \cite{EMM} we estimated this sum (in the case $f=0$, in which case $\chi(f)=1$ identically) by distinguishing various regions of $\hat{G}^n$ depending on the \emph{entropy} $H(\chi)$ of $\chi\in\hat{G}^n$, which we defined as follows. Suppose $\chi$ has $a_1$ coordinates equal to $\psi_1$, $a_2$ coordinates equal to $\psi_2$, and so on, where $\psi_1, \dots, \psi_k$ are distinct and $\sum_{i=1}^k a_i = n$. Since $\hat{1_S}(\chi)$ is a totally symmetric function of $\chi_1, \dots, \chi_n$, it makes sense to denote $\hat{1_S}(\chi)$ with the simple shorthand
\[
  \hat{1_S}(\psi_1^{a_1}, \dots, \psi_k^{a_k}).
\]
The number of $\chi$ so represented is
\[
  \binom{n}{a_1, \dots, a_k}.
\]
We define $H(\chi)$ by
\[
  H(\chi) = \frac1n \log \binom{n}{a_1,\dots,a_k}.
\]
The language is motivated by the fact that $H(\chi)$ is approximately the (Shannon) entropy of the random variable which takes value $i$ with probability $a_i/n$:
\[
  H(\chi) \approx \sum_{i=1}^k \frac{a_i}n \log \frac{n}{a_i}.
\]
(See Lemma~\ref{entropy} for a precise version of this approximation.) Our basic division of $\hat{G}^n$ is then defined by $H \leq \epsilon$ and $H > \epsilon$, where $\epsilon$ is a small positive parameter:
\[
  \sum_{\chi\in\hat{G}^n} \hat{1_S}(\chi)^3 \chi(f)
  = \sum_{H(\chi) \leq \epsilon} \hat{1_S}(\chi)^3 \chi(f) + \sum_{H(\chi)>\epsilon} \hat{1_S}(\chi)^3 \chi(f).
\]

The sum over low-entropy characters can in turn be related to a sum over \emph{sparse} characters, where we call $\chi$ $m$-sparse if exactly $m$ of its coordinates are nonzero. First, by a straightforward calculation, if $H(\chi)\leq \epsilon$ then some coordinate of $\chi$ is repeated at least ${(1-2\epsilon)n}$ times. Second, assuming $\sum_{i=1}^n f(i) = \Sigma G$, the expression $\hat{1_S}(\chi)^3 \chi(f)$ is invariant under global shifts of $\chi$: if $\chi'_i = \chi_i + \psi$ for each $i$ then
\begin{align}
  \hat{1_S}(\chi') &= \frac1{n^n} \sump_{x_1,\dots,x_n} \chi_1(x_1) \cdots \chi_n(x_n) \psi(x_1 + \dots + x_n)\nonumber\\
  &= \hat{1_S}(\chi) \psi\(\Sigma G\),\label{shift-invariance}
\end{align}
so, since $2\Sigma G = 0$,
\[
  \hat{1_S}(\chi')^3 \chi'(f) = \hat{1_S}(\chi)^3 \chi(f) \psi(3\Sigma G + \Sigma f) = \hat{1_S}(\chi)^3 \chi(f).
\]
Thus, by always shifting so that the majority coordinate is $0$, we can decompose the sum over low-entropy characters as
\[
  \sum_{H(\chi)\leq \epsilon} \hat{1_S}(\chi)^3 \chi(f) = n \sum_{m=0}^{2\epsilon n} \sum_{\substack{m\text{-sparse}~\chi\\H(\chi)\leq \epsilon}} \hat{1_S}(\chi)^3 \chi(f).
\]
Thus we have
\begin{align}
  \sum_{\chi\in\hat{G}^n} \hat{1_S}(\chi)^3 \chi(f)
  &= n \sum_{m=0}^{2\epsilon n} \sum_{\substack{m\text{-sparse}~\chi\\H(\chi)\leq \epsilon}} \hat{1_S}(\chi)^3 \chi(f) + \sum_{H(\chi)>\epsilon} \hat{1_S}(\chi)^3 \chi(f)\nonumber\\
  &= n \sum_{m=0}^M \sum_{m\text{-sparse}~\chi} \hat{1_S}(\chi)^3 \chi(f) + O\(n \sum_{m=M+1}^{2\epsilon n} \sum_{m\text{-sparse}~\chi} |\hat{1_S}(\chi)|^3 \)\nonumber\\
  &\qquad\qquad + O\(\sum_{H(\chi)>\epsilon} |\hat{1_S}(\chi)|^3\).\nonumber
\end{align}
Here $M$ is a parameter satisfying $1\ll M \ll \eps n$. By analogy with the circle method from analytic number theory, we refer to the terms of the first sum as \emph{major arcs}, and everything else as \emph{minor arcs}.

In \cite{EMM} we proved a number of bounds for $\hat{1_S}(\chi)$, or for sums of $\hat{1_S}(\chi)$, which allowed us to prove satisfactory estimates for these terms in the case of $G = \Z/n\Z$ with $n$ odd. We made clear our position that there are only notational modifications when $G$ is an arbitrary abelian group of odd order, but there were one or two places where we really did need the full strength of the odd order hypothesis. We state the main results again here, with clearly defined hypotheses. (Although we are stating these results with more general hypotheses than in \cite{EMM}, we need not give new proofs: they are the same.)

\begin{theorem}[Major arcs estimate, Theorem~3.1 from \cite{EMM}]\label{major-arcs}
Let $G$ be an abelian group of order $n$. If $m$ is even then
\[
  \sum_{m\text{-sparse}~\chi} \hat{1_S}(\chi)^3 = \frac{(-1)^{m/2}}{2^{m/2}(m/2)!} \(\frac{n!}{n^{n}}\)^3 + O_m\(\frac1n \(\frac{n!}{n^{n}}\)^3\),
\]
while if $m$ is odd then
\[
  \sum_{m\text{-sparse}~\chi} \hat{1_S}(\chi)^3 = O_m\(\frac1n \(\frac{n!}{n^{n}}\)^3\).
\]
\end{theorem}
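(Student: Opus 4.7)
The plan is to reduce the sum to a combinatorial count indexed by triples of set partitions of $[m]$, then isolate the dominant configuration. For any $m$-sparse character $\chi$, summing out the $n-m$ trivial coordinates gives $\hat{1_S}(\chi) = ((n-m)!/n^n)\,S_m(\chi_{i_1},\dots,\chi_{i_m})$, where
\[
  S_m(\chi_1,\dots,\chi_m) = \sump_{x_1,\dots,x_m\in G}\chi_1(x_1)\cdots\chi_m(x_m)
\]
is the restricted character sum over distinct $x_1,\dots,x_m$. Using the total symmetry of $\hat{1_S}$ to collect the $\binom{n}{m}$ possible supports, together with the factorial asymptotic $(n!/(n-m)!)^2 = n^{2m}(1+O_m(1/n))$, the claim reduces to showing
\[
  \sum_{\chi_1,\dots,\chi_m\in\hat G\setminus\{1\}} S_m(\chi_1,\dots,\chi_m)^3 = \frac{(-1)^{m/2}\,m!}{2^{m/2}(m/2)!}\,n^{2m} + O_m(n^{2m-1})
\]
when $m$ is even, and $O_m(n^{2m-1})$ otherwise.

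The key algebraic step is Möbius inversion on the partition lattice of $[m]$. Writing $\chi_B = \prod_{i\in B}\chi_i$ for a block $B$, orthogonality of characters gives
\[
  S_m(\chi_1,\dots,\chi_m) = \sum_\tau \mu(\tau)\, n^{|\tau|}\prod_{B\in\tau} 1_{\chi_B = 1},
\]
where $\tau$ ranges over set partitions of $[m]$, $|\tau|$ is the number of blocks, and $\mu(\tau) = \prod_{B\in\tau}(-1)^{|B|-1}(|B|-1)!$. Because $\chi_i\neq 1$, any $\tau$ with a singleton block contributes zero, so effectively every block has size $\geq 2$ and $|\tau|\leq m/2$. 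Cubing and summing over $(\chi_i)\in(\hat G\setminus\{1\})^m$ yields
\[
  \sum_{\chi} S_m^3 = \sum_{\tau_1,\tau_2,\tau_3}\mu(\tau_1)\mu(\tau_2)\mu(\tau_3)\,n^{|\tau_1|+|\tau_2|+|\tau_3|}\,N(\tau_1,\tau_2,\tau_3),
\]
where $N(\tau_1,\tau_2,\tau_3)$ is the number of $(\chi_i)\in(\hat G\setminus\{1\})^m$ satisfying $\chi_B=1$ for every block $B$ appearing in any $\tau_j$. These linear constraints cut out a subgroup of $\hat G^m$ of order $n^{m-r}$, where $r$ is their combined $\Z$-rank, and a short inclusion--exclusion over the hyperplanes $\{\chi_i = 1\}$ multiplies this by $1+O_m(1/n)$.

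The final step is to extremise the exponent $E(\tau_1,\tau_2,\tau_3) = |\tau_1|+|\tau_2|+|\tau_3|+(m-r)$. Since the constraints from a single $\tau_j$ are internally independent, $r\geq\max_j|\tau_j|$, and combined with $|\tau_j|\leq m/2$ this yields $E\leq 2m$. Equality forces $|\tau_1|=|\tau_2|=|\tau_3|=m/2$ (so each is a perfect matching) and $r=m/2$ (so the three matchings impose the same constraints, hence coincide). Each such extremal triple contributes $\mu(\tau)^3\,n^{3m/2}\cdot n^{m/2} = (-1)^{m/2}n^{2m}$, and summing over the $m!/(2^{m/2}(m/2)!)$ matchings yields the main term. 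All other triples contribute $O_m(n^{2m-1})$, and for $m$ odd every triple falls in this category.

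The main obstacle is the extremal analysis: verifying that two distinct matchings have combined rank strictly greater than $m/2$, so that the diagonal-on-matchings triples are the unique contributors at the top scale. This follows from the structure of the symmetric difference of two matchings (a disjoint union of even cycles of length $\geq 4$): each cycle of length $2k$ contributes rank $2k-1$ rather than $2k$, and introducing any third, distinct matching only increases the rank further. Since the entire argument uses only $|\hat G|=n$ and character orthogonality, the generalization from the odd cyclic case to arbitrary abelian $G$ of order $n$ is formally identical.
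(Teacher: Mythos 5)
Your overall skeleton (pass to the restricted sum $S_m$, M\"obius inversion on the partition lattice, cube and sum over triples of partitions, extract the main term from the $m!/(2^{m/2}(m/2)!)$ diagonal matching triples) is sound and is a genuinely different organization from the paper's, which classifies each sparse character by its maximal killing partition and uses the pointwise estimates (3.3)--(3.4) of [EMM]. However, there is a genuine gap at the counting step: the claim that the constraints $\chi_B=1$ cut out a subgroup of $\hat G^m$ of order $n^{m-r}$, with $r$ the combined $\Z$-rank, is false for general abelian $G$. The solution set is $\mathrm{Hom}(\Z^m/\Lambda,\hat G)$ with $\Lambda$ the row lattice of the constraint matrix, and its order is $n^{m-r}\prod_i|\hat G[d_i]|$, the product running over the nontrivial invariant factors $d_i$; torsion in $G$ dividing some $d_i$ inflates the count. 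Concretely, take $G=\F_2^d$, $m=4$, and the three pairwise distinct perfect matchings of $\{1,2,3,4\}$: the constraints force $\chi_1=\chi_2=\chi_3=\chi_4$ and $2\chi_1=0$, so $r=4$ but there are $n$ solutions rather than $n^{m-r}=1$. Thus your exponent $E=|\tau_1|+|\tau_2|+|\tau_3|+(m-r)$ undercounts, the inequality $E\le 2m$ together with your equality analysis does not by itself control the off-diagonal contributions, and the ``$1+O_m(1/n)$'' inclusion--exclusion over the hyperplanes $\{\chi_i=1\}$ also fails on torsion-only solution groups (it can delete a constant fraction). Your closing assertion that the argument is formally identical for every abelian $G$ because it uses only $|\hat G|=n$ and orthogonality is precisely where the even-order subtlety that this whole paper is about enters.

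The gap is repairable inside your framework. Since the blocks of a single $\tau_j$ are disjoint, its constraint system is unimodular and its solution group has order exactly $n^{m-|\tau_j|}$; hence $N(\tau_1,\tau_2,\tau_3)\le n^{m-\max_j|\tau_j|}$ for every $G$. Moreover, if $\tau_1\neq\tau_2$ are both perfect matchings, their union is a disjoint union of even cycles and the combined system is again unimodular of rank at least $m/2+1$, giving $N\le n^{m/2-1}$. Using these as upper bounds only (no inclusion--exclusion needed off the diagonal), every non-diagonal triple, and every triple when $m$ is odd, satisfies $|\tau_1|+|\tau_2|+|\tau_3|+\log_n N\le 2m-1$, while each diagonal matching triple has exactly $(n-1)^{m/2}$ admissible $\chi$, which recovers the main term $(-1)^{m/2}\,m!\,n^{2m}/(2^{m/2}(m/2)!)$. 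With this correction your route works for all abelian $G$ and is a legitimate alternative to the paper's argument, which sidesteps any joint rank computation by counting, for each fixed killing partition with $k$ parts, at most $O_m(n^{m-k})$ characters it kills and pairing this with the pointwise bound $O_m(n^{-(m-k)})\,n!/n^n$.
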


\begin{theorem}[$L^\infty$ bound for sparse characters, Proposition~6.1 from \cite{EMM}]\label{L-infty-bound}
Let $G$ be an abelian group of order $n$, where $n$ is odd. If $m\leq n/3$ then
\[
  \max_{m\text{-sparse}~\chi} |\hat{1_S}(\chi)| \leq e^{O(m^{3/2}/n^{1/2} + m^{1/2})} 2^{-m/2} \binom{n}{m}^{-1/2} \frac{n!}{n^n}.
\]
\end{theorem}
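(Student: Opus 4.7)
The plan is to reduce to bounding the partial character sum
\[
T(\chi) = \sump_{x_1,\dots,x_m} \chi_1(x_1) \cdots \chi_m(x_m),
\]
which equals $(n^n/(n-m)!)\,\hat{1_S}(\chi)$ for an $m$-sparse $\chi$ whose nonzero coordinates we relabel $\chi_1,\dots,\chi_m$ by the symmetry of $\hat{1_S}$. After rearrangement, the claim becomes
\[
|T(\chi)| \leq e^{O(m^{3/2}/n^{1/2}+m^{1/2})}\, 2^{-m/2} \sqrt{m!\cdot n!/(n-m)!}.
\]

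First I would apply M\"obius inversion on the partition lattice $\Pi_m$ to get
\[
T(\chi) = \sum_{Q\in \Pi_m} (-1)^{m-|Q|} \prod_{B\in Q}(|B|-1)!\cdot n^{|Q|}\cdot \mathbf{1}[Q\text{ compatible}],
\]
where $Q$ is \emph{compatible} if $\sum_{i\in B}\chi_i=0$ for every block $B$. Since each $\chi_i\neq 0$, no singleton block contributes, every block has size $\geq 2$, and $|Q|\leq m/2$. The dominant contribution comes from pair partitions (all blocks of size $2$). The odd-order hypothesis forbids nontrivial $\psi$ with $2\psi=0$, so a compatible pair $\{i,j\}$ must satisfy $\chi_j=-\chi_i$. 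Writing $a_\psi$ for the multiplicity of character value $\psi$, the number of compatible pair partitions is $\prod_{\{\psi,-\psi\}}a_\psi!$ (zero unless $a_\psi=a_{-\psi}$), and at most $(m/2)!$. The resulting bound $(m/2)!\,n^{m/2}$ matches the target up to the factor $e^{O(m^{1/2}+m^{3/2}/n^{1/2})}$, after accounting for Stirling (giving an $m^{1/4}$ slack between $(m/2)!$ and $2^{-m/2}\sqrt{m!}$) and the discrepancy between $n^{m/2}$ and $\sqrt{n!/(n-m)!}$ (of size $\exp(m^2/(4n)+O(m^3/n^2))\leq e^{O(m^{3/2}/\sqrt n)}$ for $m\leq n/3$).

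For partitions with at least one block of size $\geq 3$, I would invoke the exponential formula: the signed sum over all partitions with blocks $\geq 2$, weighted by $(-1)^{|B|-1}(|B|-1)!\,n$ per block, has exponential generating function $\exp(n(\log(1+x)-x))=(1+x)^n e^{-nx}$. Factoring
\[
(1+x)^n e^{-nx} = e^{-nx^2/2}\cdot\exp\bigl(n(x^3/3-x^4/4+\cdots)\bigr),
\]
and extracting $m![x^m]$ at the natural saddle scale $x\asymp\sqrt{m/n}$ (feasible for $m\leq n/3$), the second factor contributes only an $e^{O(m^{3/2}/n^{1/2})}$ correction beyond the pair-partition value produced by $e^{-nx^2/2}$.

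The main obstacle is that this generating-function identity controls the unrestricted signed sum over partitions with blocks $\geq 2$, while $T(\chi)$ is the restriction to compatible partitions. To close the gap, one must show that for any $\chi$ the signed sum over compatible partitions with blocks of size $\geq 3$ is controlled by the analogous unrestricted sum, up to a factor absorbed in $e^{O(m^{1/2})}$. This is the most delicate step, and relies crucially on the odd-order hypothesis to rule out self-inverse pairings like $\chi_i+\chi_i=0$, which would spuriously inflate the count of compatible partitions of every shape.
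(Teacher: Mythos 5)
Your skeleton is fine as far as it goes: the reduction to $T(\chi)=\frac{n^n}{(n-m)!}\hat{1_S}(\chi)$, the M\"obius expansion over the partition lattice, the observation that odd order forces a compatible pair block to match $\psi$ with $-\psi\neq\psi$ so that there are at most $(m/2)!$ compatible matchings (versus $(m-1)!!\approx 2^{m/2}(m/2)!$ unrestricted ones), and the Stirling and $n^{m/2}$ versus $\sqrt{n!/(n-m)!}$ bookkeeping are all correct. But this is not a proof: the step you defer to the last paragraph --- controlling the signed sum over \emph{compatible} partitions containing a block of size at least $3$ --- is where all the content of the theorem lies, and you give no argument for it. Moreover, the comparison you propose looks insufficient even if it were established. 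The unrestricted generating-function quantity knows nothing about compatibility, and it is exactly the compatibility restriction on the pair blocks that produces the gain $2^{-m/2}$; this restriction must also be exploited for the pair blocks of the partitions that contain a large block. Concretely, the unrestricted signed sum over shapes with a block of size $\geq 3$ is dominated by the ``two triples plus pairs'' class, of magnitude about $(m^3/n)\,(m-1)!!\,n^{m/2}$, which for $m\approx\sqrt{n}$ exceeds the target $2^{-m/2}\sqrt{m!\,n!/(n-m)!}$ by roughly $2^{m/2}$ times a power of $m$, far beyond the permitted slack $e^{O(m^{1/2}+m^{3/2}/n^{1/2})}$ there. So bounding the compatible part by the unrestricted part, even with an $e^{O(m^{1/2})}$ loss, cannot yield the stated inequality in the middle of the range; the comparison would have to be made shape by shape against a compatibility-aware count.

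A second problem is your closing remark that oddness ``rules out'' spurious inflation of compatible partitions of every shape: it only constrains pair blocks. Odd order permits $3$-torsion, and for $G=\F_3^d$, $\psi$ of order $3$ and $\chi=(\psi^m,0^{n-m})$ every partition of the support into triples is compatible; these terms all carry the same sign and contribute $\frac{m!}{6^{m/3}(m/3)!}(2n)^{m/3}$ in total, which near the endpoint $m=n/3$ is of at least the same exponential order as the target. Hence in the upper part of the range the non-pair compatible partitions cannot be discarded by absolute values at all: one must exhibit cancellation among compatible partitions of different shapes (here against the blocks of size $6,9,\dots$), and your outline supplies no mechanism for this. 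For comparison, the paper does not reprove this statement; it quotes \cite{EMM}, where --- as with Theorem~\ref{new-theorem} here --- the proof runs along entirely different lines, using only the recursive identity \eqref{recursion} and an induction on the number of nonzero coordinates via a renormalized two-term recurrence, which is where the factor $2^{-m/2}$ is extracted. So your route, if it can be completed, would be a genuinely new argument, but as written the decisive step is missing and, in its present formulation, likely unworkable.
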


\begin{theorem}[Sparseval, Theorem~5.1 from \cite{EMM}]\label{sparseval}
Let $G$ be an abelian group of order $n$. If $m\leq n/2$ then
\[
  \sum_{m\text{-sparse}~\chi} |\hat{1_S}(\chi)|^2 \leq O(m^{1/4}) e^{O(m^{3/2}/n^{1/2})} \binom{n}{m}^{1/2} \(\frac{n!}{n^{n}}\)^2.
\]
\end{theorem}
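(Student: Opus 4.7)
The plan is to perform a direct ``Sparseval'' computation, reducing $\sum_{m\text{-sparse}~\chi} |\hat{1_S}(\chi)|^2$ to an explicit integral and then estimating that integral. First, by the total symmetry of $\hat{1_S}$ in its arguments and the fact that any distinct $x_1,\dots,x_m \in G$ can be completed to a bijection $[n]\to G$ in exactly $(n-m)!$ ways,
\[
  \sum_{m\text{-sparse}~\chi} |\hat{1_S}(\chi)|^2 = \binom{n}{m} \frac{(n-m)!^2}{n^{2n}} \sum_{\psi_1,\dots,\psi_m\in\hat G\setminus\{0\}} |T(\psi)|^2,
\]
where $T(\psi) := \sump_{y_1,\dots,y_m\in G} \psi_1(y_1)\cdots\psi_m(y_m)$.

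Second, I would expand $|T(\psi)|^2 = \sump_{y,z}\prod_i \psi_i(y_i-z_i)$ and apply the coordinatewise orthogonality identity $\sum_{\psi\ne0}\psi(w) = n\cdot 1_{w=0} - 1$ to obtain $\sum_{\psi_i\ne 0}|T(\psi)|^2 = \sump_{y,z}\prod_i(n\cdot 1_{y_i=z_i}-1)$. Expanding the product over subsets $J\subseteq[m]$ and counting pairs of injections $[m]\to G$ agreeing on $J$ (there are $n!(n-|J|)!/(n-m)!^2$ such pairs) yields
\[
  \sum_{\psi_i\ne 0} |T(\psi)|^2 = \frac{n!}{(n-m)!^2}\,a_m, \quad\text{where}\quad a_m := \sum_{j=0}^m \binom{m}{j}(-1)^{m-j} n^j (n-j)!.
\]
Using $(n-j)! = \int_0^\infty x^{n-j}e^{-x}\,dx$ and the binomial theorem, this is the same as $a_m = \int_0^\infty x^{n-m}(n-x)^m e^{-x}\,dx$. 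Combining everything gives the clean identity
\[
  \sum_{m\text{-sparse}~\chi} |\hat{1_S}(\chi)|^2 = \binom{n}{m}\,\frac{n!\,a_m}{n^{2n}}.
\]

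It remains to prove $a_m \le O(m^{1/4}) e^{O(m^{3/2}/\sqrt n)} \binom{n}{m}^{-1/2} n!$, which, after applying Stirling, is equivalent to the claim. Writing $a_m = (n-m)!\,\E[(n-X)^m]$ for $X\sim\mathrm{Gamma}(n-m+1,1)$, the random variable $n-X$ has mean $m-1$ and variance $n-m+1\asymp n$. A Gaussian leading-order expansion (using that the cumulants $\kappa_k(X) = (n-m+1)(k-1)!$ of Gamma make the non-Gaussian contributions subleading) gives $a_m \asymp (m-1)!!\,n^{m/2}(n-m)!$ for $m$ even, which by Stirling is already a factor of $m^{1/2}$ smaller than the target; the odd case is analogous and is smaller by a further factor. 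For $m$ up to $n/2$, an Edgeworth-type cumulant expansion controls the non-Gaussian corrections by the factor $e^{O(m^{3/2}/\sqrt n)}$, precisely as in the claim. The main technical obstacle is making this moment estimate uniform over the full range $m\le n/2$, where the variance of $X$ is still comparable to $n$; everything up to that point is routine algebra.
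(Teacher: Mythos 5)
Your combinatorial reduction is correct, and it is carried out cleanly: using symmetry, the completion count $(n-m)!$, and the orthogonality identity $\sum_{\psi\neq0}\psi(w)=n\cdot1_{w=0}-1$, one does get the exact identity
\[
  \sum_{m\text{-sparse}~\chi} |\hat{1_S}(\chi)|^2 \;=\; \binom{n}{m}\,\frac{n!\,a_m}{n^{2n}},
  \qquad
  a_m \;=\; \sum_{j=0}^m\binom{m}{j}(-1)^{m-j}n^j(n-j)! \;=\; \int_0^\infty x^{n-m}(n-x)^m e^{-x}\,dx
\]
(sanity checks: $a_1=0$, and $a_2=n\,(n-2)!$ matches a direct evaluation of the $2$-sparse sum). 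This correctly reduces the theorem to the single estimate $a_m\le O(m^{1/4})\,e^{O(m^{3/2}/n^{1/2})}\binom{n}{m}^{-1/2}n!$ for $m\le n/2$, and this reduction is essentially the same starting point as the original argument in \cite{EMM} (the present paper only cites that proof).

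The gap is that this remaining estimate \emph{is} the theorem, and your proposal asserts it rather than proves it. Writing $a_m=(n-m)!\,\E[(n-X)^m]$ with $X\sim\mathrm{Gamma}(n-m+1,1)$ is fine, but the claim $a_m\asymp(m-1)!!\,n^{m/2}(n-m)!$ is false once $m\gtrsim n^{1/3}$: even for an exactly Gaussian variable with mean $\mu\approx m$ and standard deviation $\sigma\approx n^{1/2}$, the $m$-th moment $\sum_{k\ \mathrm{even}}\binom{m}{k}\mu^{m-k}\sigma^k(k-1)!!$ is dominated not by the $k=m$ term but by terms with $m-k\asymp m^{3/2}/n^{1/2}$, and these exceed $\sigma^m(m-1)!!$ by a factor $e^{\Theta(m^{3/2}/n^{1/2})}$; so the very factor $e^{O(m^{3/2}/n^{1/2})}$ in the statement is exactly the quantity your sketch treats as a negligible correction. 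Likewise, an ``Edgeworth-type cumulant expansion'' is not an off-the-shelf tool here: Edgeworth expansions control the distribution near the centre, or moments of bounded order, whereas you need the $m$-th moment with $m$ as large as $n/2$, a large-deviation/saddle-point regime in which both the Gaussian main term and the non-Gaussian cumulant corrections must be tracked uniformly. What is required — and what you yourself flag as the ``main technical obstacle'' — is a genuine uniform analysis, e.g.\ a Laplace/saddle-point treatment of $\int_0^\infty x^{n-m}(n-x)^m e^{-x}\,dx$ or a careful term-by-term bound on the binomial/moment sum, valid for all $m\le n/2$ and producing both the $e^{O(m^{3/2}/n^{1/2})}$ and the $O(m^{1/4})$ factors. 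As it stands, the proposal is a correct exact reduction followed by an unproved (and, in its stated $\asymp$ form, incorrect) analytic claim, so it does not yet constitute a proof.
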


\begin{theorem}[Square-root cancellation, Theorem~4.1 from \cite{EMM}]\label{square-root-cancellation}
Let $G$ be an abelian group of order $n$. If $\chi = (\psi_1^{a_1}, \dots, \psi_k^{a_k})$, then
\[
  |\hat{1_S}(\chi)| \leq \binom{n+k-1}{k-1}^{1/2} \binom{n}{a_1,\dots,a_k}^{-1/2} \frac{n!}{n^n}.
\]
\end{theorem}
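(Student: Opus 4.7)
The approach is to prove the bound by a Gaussian expectation trick that turns the problem into a moment computation.

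First, factor out the combinatorial symmetry. Grouping the coordinates of $\chi$ equal to each $\psi_i$, one obtains $n^n \hat{1_S}(\chi) = \prod_i a_i! \cdot T$, where
\[
  T = \sum_{f : G \to [k],\; |f^{-1}(i)| = a_i} \prod_{g \in G} \psi_{f(g)}(g)
\]
is the coefficient of $z_1^{a_1} \cdots z_k^{a_k}$ in the homogeneous degree-$n$ polynomial
\[
  F(z_1, \ldots, z_k) = \prod_{g \in G}\(\sum_{i=1}^k z_i \psi_i(g)\).
\]
The polynomial $F$ has exactly $\binom{n+k-1}{k-1}$ monomials, which is where the outer factor in the bound will come from.

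The key trick is a Bargmann--Fock identity. Let $Z = (Z_1, \ldots, Z_k)$ be a standard complex Gaussian on $\C^k$. Coordinate-wise $\E[Z_i^{m}\overline{Z_i^{m'}}] = m!\,\delta_{m = m'}$ makes distinct monomials orthogonal in $L^2$ with $\norm{z^t}^2 = \prod_i t_i!$, so
\[
  \E|F(Z)|^2 = \sum_{t \in \Z_{\geq 0}^k,\; |t|=n} \prod_i t_i! \cdot |T_t|^2 \geq \prod_i a_i! \cdot |T|^2.
\]
It therefore suffices to show $\E|F(Z)|^2 \leq \binom{n+k-1}{k-1} n!$.

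For this, let $V$ be the $n \times k$ matrix $V_{g,i} = \psi_i(g)$ and $Y = VZ \in \C^n$, so that $F(Z) = \prod_g Y_g$. The orthogonality $\sum_g \psi_i(g)\overline{\psi_j(g)} = n\delta_{ij}$ is precisely $V^*V = n I_k$, hence $\norm{Y}^2 = n \norm{Z}^2$. By AM--GM,
\[
  \prod_g |Y_g|^2 \leq \(\norm{Y}^2/n\)^n = \norm{Z}^{2n}.
\]
Since $\norm{Z}^2 = \sum_i |Z_i|^2$ is Gamma$(k,1)$-distributed, $\E \norm{Z}^{2n} = (n+k-1)!/(k-1)! = \binom{n+k-1}{k-1} n!$. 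Chaining the inequalities yields $\prod_i a_i! \cdot |T|^2 \leq \binom{n+k-1}{k-1} n!$, which rearranges (using $\hat{1_S}(\chi) = \prod a_i! \cdot T/n^n$ and $\binom{n}{a_1, \ldots, a_k} = n!/\prod a_i!$) to the claimed bound.

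The only nontrivial step is recognizing the Bargmann--Fock identity; once it is in hand, the AM--GM bound (forced by $V^*V = nI$) and the Gamma moment computation are routine.
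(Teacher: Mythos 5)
Your proof is correct: the factorization $n^n\hat{1_S}(\chi)=\prod_i a_i!\cdot T$ with $T$ the coefficient of $z_1^{a_1}\cdots z_k^{a_k}$ in $\prod_{g\in G}\sum_i z_i\psi_i(g)$, the monomial orthogonality $\E[Z^t\overline{Z^{t'}}]=\delta_{tt'}\prod_i t_i!$, the AM--GM step (which uses exactly the orthogonality $V^*V=nI_k$ of the distinct characters $\psi_1,\dots,\psi_k$, valid in any abelian group), and the moment $\E\|Z\|^{2n}=\binom{n+k-1}{k-1}\,n!$ all check out and rearrange to precisely the stated bound. The paper itself gives no proof of this statement --- it is imported verbatim from \cite{EMM} --- and your Gaussian second-moment argument is essentially the same coefficient-extraction-plus-orthogonality mechanism behind the cited Theorem~4.1 (equivalently, bounding $\int|F|^2$ over a rotation-invariant measure, or the permanent of the rank-$k$ Gram matrix), so there is nothing further to add.
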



By combining Theorems \ref{major-arcs} to \ref{square-root-cancellation} appropriately we proved, for odd-order $G$ and sufficiently small but constant $\eps>0$,
\[
  \sum_{m=0}^{2\epsilon n} \sum_{\substack{m\text{-sparse}~\chi\\H(\chi)\leq \epsilon}} \hat{1_S}(\chi)^3 = (e^{-1/2} + o(1)) \(\frac{n!}{n^{n}}\)^3
\]
and
\begin{equation}\label{high-entropy bound}
  \sum_{H(\chi)>\epsilon} |\hat{1_S}(\chi)|^3 = O_\epsilon\(\exp(-c\epsilon n) \(\frac{n!}{n^{n}}\)^3\),
\end{equation}
and this proves Theorem~\ref{main-EMM}.

In this paper we follow the same broad strategy, but the following amendments are necessary.
\begin{enumerate}[1.]
  \item We need a ``twisted'' version of the major arcs estimate (Theorem~\ref{major-arcs}) which includes the factor $\chi(f)$. This is a straightforward modification, but it requires rehashing the proof of that estimate. This is where the singular series $\S(f)$ series comes from. See Section~\ref{sec:major-arcs}.
  \item We need a replacement for the $L^\infty$ bound (Theorem~\ref{L-infty-bound}) which does not require the odd-order hypothesis. Our replacement will however only be effective for $m$ up to roughly $n/(\log n)^2$. This is the most technical part of the paper. See Section~\ref{sec:low-entropy}.
  \item Because our replacement for Theorem~\ref{L-infty-bound} is only effective for $m$ up to $n/(\log n)^2$ we need to revisit the proof of \eqref{high-entropy bound} in order to cover a wider entropy range. See Section~\ref{sec:high-entropy}.
\end{enumerate}
Having done all this we will then be in a position to prove our main theorems in Section~\ref{sec:main-theorem}. The two applications mentioned in the introduction are covered in Sections~\ref{sec:latin-cubes} and~\ref{sec:cryptography} respectively.

\section{Major arcs}
\label{sec:major-arcs}
\newcommand{\cP}{\mathcal{P}}

In this section we prove the following theorem, which generalizes and replaces Theorem~\ref{major-arcs}.

\begin{theorem}[Major arcs estimate]\label{major-arcs-replacement}
Let $G$ be a group of order $n$. If $m$ is even then
\begin{align}
  \sum_{m\text{-sparse}~\chi} \hat{1_S}(\chi)^3 \chi(f)
  &= \frac1{(m/2)!} \(-\frac1{2n^2} \sum_{x\in G} |f^{-1}(x)|^2 \)^{m/2} \(\frac{n!}{n^{n}}\)^3\nonumber\\
  &\qquad\qquad + O_m\(\frac1n \(\frac{n!}{n^{n}}\)^3\),\nonumber 
\end{align}
while if $m$ is odd then
\begin{equation}\nonumber 
  \sum_{m\text{-sparse}~\chi} |\hat{1_S}(\chi)|^3 = O_m\(\frac1n \(\frac{n!}{n^{n}}\)^3\).
\end{equation}
\end{theorem}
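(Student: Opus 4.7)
The plan is to follow the proof of Theorem~\ref{major-arcs} in \cite{EMM}, carrying the twist factor $\chi(f) = \prod_j \psi_j(f(i_j))$ through every step; this is where the singular series $\S(f)$ will enter. An $m$-sparse $\chi$ is specified by a support $P\subseteq[n]$ of size $m$ together with nonzero values $\psi_1,\dots,\psi_m\in\hat{G}\setminus\{0\}$ indexed by the natural ordering of $P$. Inclusion-exclusion on the distinctness constraint in the defining sum produces
\[
  \hat{1_S}(\chi) = \frac{(n-m)!}{n^n}\sum_T n^{|T|}\prod_{B\in T}(-1)^{|B|-1}(|B|-1)!\cdot 1_{\sum_{j\in B}\psi_j=0},
\]
where $T$ ranges over set partitions of $[m]$; since $\psi_j\neq 0$, only partitions with no singletons survive, and $|T|\leq m/2$ with equality precisely on matchings.

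Cubing produces a triple sum over $(T_1,T_2,T_3)$. For fixed $m$, the leading power of $n$ is obtained when each $T_r$ is a matching, which requires $m$ to be even. For triples of matchings $(M_1,M_2,M_3)$ the constraints $\psi_j+\psi_{M_r(j)}=0$ force $\psi$ to be constant up to sign on each connected component of the multigraph $M_1\cup M_2\cup M_3$, and the number of nonzero solutions is $(n-1)^c$ where $c$ counts the (necessarily bipartite) components. Since every vertex has degree three in this multigraph, $c\leq m/2$, with equality iff $M_1=M_2=M_3$. Hence the leading contribution comes from the diagonal $M_1=M_2=M_3=M$.

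On the diagonal the twist simplifies via $\psi_{M(j)}=-\psi_j$ to $\prod_{\{j,k\}\in M}\psi_j(f(i_j)-f(i_k))$, and summing each free $\psi_j$ over $\hat G\setminus\{0\}$ replaces each pair by $n\cdot 1_{f(i_j)=f(i_k)}-1$. Summing over supports and matchings, equivalently over pair partitions $\pi$ of $m$-subsets of $[n]$, the pairs behave independently to leading order:
\[
  \sum_\pi\prod_{\{a,b\}\in\pi}\bigl(n\cdot 1_{f(a)=f(b)}-1\bigr) = \frac{1}{(m/2)!}\Bigl(\tfrac{n}{2}\sum_{x\in G}|f^{-1}(x)|^2 + O(n^2)\Bigr)^{m/2}.
\]
Combining this with the prefactor $((n-m)!/n^n)^3\sim (n!/n^n)^3/n^{3m}$, the leading power $n^{3m/2}$ from the expansion, and the sign $(-1)^{3m/2}=(-1)^{m/2}$, all powers of $n$ cancel and the expression collapses to $\frac{1}{(m/2)!}\bigl(-\tfrac{1}{2n^2}\sum_x|f^{-1}(x)|^2\bigr)^{m/2}(n!/n^n)^3$.

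The main obstacle is the error bookkeeping: every sub-leading configuration --- triples with a block of size $\geq 3$, triples of matchings not all equal (where $c<m/2$), the independence error in the position sum, the $O(n^2)$ correction in the parenthesis above, and the correction in $(n-m)!/n!\sim n^{-m}$ --- must be shown to lose at least one factor of $n$ relative to the main term. Using the trivial bound $\sum_x|f^{-1}(x)|^2\leq n^2$, all such contributions aggregate into the claimed $O_m(n^{-1}(n!/n^n)^3)$ error. For odd $m$, no matching of $[m]$ exists, so $|T_1|+|T_2|+|T_3|\leq 3(m-1)/2$, and the same bookkeeping combined with the trivial bound $|\chi(f)|=1$ yields the odd-$m$ statement.
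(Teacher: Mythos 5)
Your proposal is correct and is at bottom the same argument as the paper's: both carry the twist $\chi(f)$ through the major-arc computation of \cite{EMM}, identify matchings (equivalently, characters killed by an $(m/2)$-part partition) as the only configurations contributing at top order, and evaluate the dominant diagonal contribution via the complete character sums $\sum_{\psi\neq 0}\psi(f(a)-f(b)) = n\,1_{f(a)=f(b)}-1$, summed over pairs with replacement exactly as in \eqref{eqn:Mmsum-chi-only} and \eqref{eqn:NPsum}. The organizational difference is that you re-derive the sparse Fourier coefficient from scratch via the partition-lattice (M\"obius) expansion and then do the error bookkeeping over triples $(T_1,T_2,T_3)$ of singleton-free partitions, whereas the paper quotes the pointwise estimates \eqref{k-parts} and \eqref{m/2-parts} from \cite{EMM} and disposes of everything outside the set $\mathfrak{M}_m$ of characters killed by a \emph{unique} $(m/2)$-part partition in one stroke (Lemma~\ref{lem:Xm}); your route is more self-contained, the paper's is shorter. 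Two small corrections that do not affect the outcome: the union $M_1\cup M_2\cup M_3$ of three matchings need not be bipartite (three pairwise distinct matchings on four points give $K_4$), and accordingly the number of admissible $\psi$ on a component need not equal $n-1$ (an odd cycle forces $2\psi=0$, so the count depends on the $2$-torsion of $G$); but you only use the exact count on the diagonal $M_1=M_2=M_3$ and an upper bound $n^{c}$ with $c\le m/2-1$ off it, so this slip is harmless. Likewise, for triples containing a block of size at least $3$ your sketch still needs a bound on the number of compatible $\psi$ (for instance at most $n^{m-\max_r|T_r|}$, refined by a cycle count when two of the three partitions are matchings); this is routine and is precisely the content the paper imports via \eqref{k-parts}, so your bookkeeping does close, including the odd-$m$ case.
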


To prove this we recall some formulae from \cite[Section~3]{EMM}. Given an $m$-sparse character
\[
  \chi = (\chi_1,\dots,\chi_m,0^{n-m}) \in \hat{G}^n,
\]
we say that a partition $\cP$ of $\{1,\dots,m\}$ \emph{kills} $(\chi_1,\dots,\chi_m)$ if $\sum_{i\in P} \chi_i = 0$ for each part $P\in\cP$. Since we are assuming that each $\chi_i$ is nonzero, a killing partition of $(\chi_1,\dots,\chi_m)$ can have at most $\floor{m/2}$ parts. Moreover we have the following two calculations (see (3.3) and (3.4) from \cite{EMM}):
\begin{itemize}
  \item Suppose every killing partition of $(\chi_1,\dots,\chi_m)$ has at most $k$ parts. Then
  \begin{equation}
  \label{k-parts}
    |\hat{1_S}(\chi_1,\dots,\chi_m,0^{n-m})| = O_m\(\frac1{n^{m-k}} \frac{n!}{n^n}\).
  \end{equation}
  \item Suppose that $m$ is even and that $(\chi_1,\dots,\chi_m)$ is killed by a unique partition with $m/2$ parts. Then
  \begin{equation}
  \label{m/2-parts}
    \hat{1_S}(\chi_1,\dots,\chi_m,0^{n-m}) = \frac{(-1)^{m/2}}{n^{m/2}} \frac{n!}{n^n} + O_m\(\frac1{n^{m/2+1}} \frac{n!}{n^n}\).
  \end{equation}
\end{itemize}

Our first claim is that we may ignore all $\chi$ except those whose nonzero coordinates are killed by a unique partition with $m/2$ parts, i.e., exactly those $\chi$ covered by \eqref{m/2-parts}. Note this already proves the odd case of Theorem~\ref{major-arcs-replacement}.

\begin{lemma}\label{lem:Xm}
Let $\mathfrak{M}_m\subset \hat{G}^n$ be the set of all $m$-sparse $\chi$ whose nonzero coordinates are killed by a unique partition with $m/2$ parts. Then
\[
  \sum_{m\text{-sparse}~\chi\notin \mathfrak{M}_m} |\hat{1_S}(\chi)|^3 = O_m\(\frac1n \(\frac{n!}{n^n}\)^3\).
\]
\end{lemma}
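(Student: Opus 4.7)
The plan is to partition $\chi\notin\mathfrak{M}_m$ into two categories based on the combinatorial structure of their killing partitions, bound $|\hat{1_S}(\chi)|$ in each via \eqref{k-parts}, and count directly. Category (A) consists of those $\chi$ for which every killing partition has at most $\lfloor(m-1)/2\rfloor$ parts (always the case when $m$ is odd; when $m$ is even this means no killing partition of size $m/2$). Category (B), empty unless $m$ is even, consists of those $\chi$ admitting at least two distinct killing partitions with $m/2$ parts. Since $\chi\notin\mathfrak{M}_m$ means either no killing partition with $m/2$ parts or more than one such, (A) and (B) exhaust the relevant set.

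For category (A), fix $k\leq\lfloor(m-1)/2\rfloor$. The number of $m$-sparse $\chi$ admitting some killing partition with $k$ parts is at most $\binom{n}{m}\cdot O_m(1)\cdot n^{m-k}$, obtained by choosing the $m$ nonzero positions ($\binom{n}{m}$ ways), choosing the partition ($O_m(1)$ ways), and then choosing the characters in each part freely except for one per part (fixed by the killing constraint), giving $n^{m-k}$ tuples. (The case of no killing partition at all is handled separately, bounded trivially by $\binom{n}{m}(n-1)^m$.) Combined with $|\hat{1_S}(\chi)|^3 = O_m(n^{-3(m-k)}(n!/n^n)^3)$ from \eqref{k-parts}, the contribution from $\chi$ whose maximum killing partition has $k$ parts is $O_m(n^{2k-m}(n!/n^n)^3)$. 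Since $2k-m\leq -1$ throughout the range and there are $O_m(1)$ such values of $k$, category (A) totals $O_m(n^{-1}(n!/n^n)^3)$.

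For category (B), the key step is to show that any two distinct perfect matchings $M_1,M_2$ of $\{1,\dots,m\}$ both killing $\chi$ cut the number of compatible character tuples from $O_m(n^{m/2})$ to $O_m(n^{m/2-1})$. To see this, decompose the multigraph $M_1\cup M_2$ into its connected components: isolated edges (from $M_1\cap M_2$) and alternating even cycles of length $\geq 4$ (from $M_1\triangle M_2$). The killing constraints along the edges of each component leave exactly one degree of freedom. If $c$ counts isolated edges and $r$ counts cycles of half-lengths $\ell_1,\dots,\ell_r\geq 2$ with $c+\sum_j\ell_j=m/2$, then $c+r\leq c+(m/2-c)/2 = m/4 + c/2$; since $M_1\neq M_2$ forces $r\geq 1$ and hence $c\leq m/2-2$, we obtain $c+r\leq m/2-1$. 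Summing over the $\binom{n}{m}$ choices of positions and the $O_m(1)$ choices of $(M_1,M_2)$, the number of such $\chi$ is $O_m(n^{3m/2-1})$, which combined with $|\hat{1_S}(\chi)|^3 = O_m(n^{-3m/2}(n!/n^n)^3)$ from \eqref{k-parts} yields a contribution of $O_m(n^{-1}(n!/n^n)^3)$, as required.

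The main obstacle is the degree-of-freedom bookkeeping in category (B): one must verify that two distinct perfect matchings really do impose enough independent constraints to save a factor of $n$ over the single-matching count. Once this is in hand, both categories reduce to routine counting against \eqref{k-parts}.
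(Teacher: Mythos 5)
Your proof is correct and follows essentially the same route as the paper: split the $m$-sparse $\chi\notin\mathfrak{M}_m$ according to the (maximal) killing partition, count the tuples compatible with each partition, and apply the bound \eqref{k-parts}, with the case of two distinct killing matchings losing a factor of $n$ in the count. Your cycle-decomposition argument for that last count is just an explicit verification of a step the paper asserts without detail, so there is no substantive difference.
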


\begin{proof}
By permutation-invariance we have
\[
  \sum_{m\text{-sparse}~\chi\notin \mathfrak{M}_m} |\hat{1_S}(\chi)|^3 = \binom{n}{m} \sum_{\substack{\chi_1,\dots,\chi_m\neq 0\\ (\chi_1,\dots,\chi_m,0^{n-m})\notin \mathfrak{M}_m}} |\hat{1_S}(\chi_1,\dots,\chi_m,0,\dots,0)|^3.
\]
We now divide the sum up acoording to the partition $\cP$ of maximal size which kills $(\chi_1,\dots,\chi_m)$ (if there are ties, just choose one). Since there are only $O_m(1)$ such partitions, we may consider each partition individually. Fix such a partition $\cP$ of $\{1,\dots,m\}$, and suppose $\cP$ has $k$ parts.

First consider the case $k < m/2$. The number of $(\chi_1,\dots,\chi_m)$ killed by $\cP$ is $O_m(n^{m-k})$, so by \eqref{k-parts} the contribution to the sum from $\cP$ is
\[
  \binom{n}{m} O_m(n^{m-k}) O_m\(\frac1{n^{m-k}} \frac{n!}{n^n}\)^3 = O_m\(\frac1{n^{m-2k}} \(\frac{n!}{n^n}\)^3\) = O_m\(\frac1n \(\frac{n!}{n^n}\)^3\).
\]

Now consider the case $k = m/2$. By definition of $\mathfrak{M}_m$, every $\chi\notin \mathfrak{M}_m$ killed by $\cP$ is also killed by some other partition with $m/2$ parts. Since the number of partitions is $O_m(1)$, the number of such $\chi$ is $O_m(n^{m/2-1})$, so again we get a satisfactory bound for the contribution from $\cP$:
\[
  \binom{n}{m} O_m(n^{m/2-1}) O_m\(\frac1{n^{m/2}} \frac{n!}{n^n}\)^3 = O_m\(\frac1n \(\frac{n!}{n^n}\)^3\).\qedhere
\]
\end{proof}

It remains to estimate the sum
\[
  \sum_{\chi\in \mathfrak{M}_m} \hat{1_S}(\chi)^3 \chi(f).
\]
Applying \eqref{m/2-parts}, we have
\begin{align*}
  \sum_{\chi\in \mathfrak{M}_m} \hat{1_S}(\chi)^3 \chi(f)
  &= \sum_{\chi\in \mathfrak{M}_m} \(\frac{(-1)^{m/2}}{n^{m/2}} + O_m\(\frac1{n^{m/2+1}}\)\)^3 \(\frac{n!}{n^n}\)^3 \chi(f) \\
  &= \frac{(-1)^{m/2}}{n^{3m/2}} \(\sum_{\chi\in \mathfrak{M}_m} \chi(f)\) \(\frac{n!}{n^n}\)^3 + O_m\(\frac{|\mathfrak{M}_m|}{n^{3m/2+1}} \(\frac{n!}{n^n}\)^3\).
\end{align*}
Since $\mathfrak{M}_m$ has size $\binom{n}{m} O_m(n^{m/2}) = O_m(n^{3m/2})$, this is
\begin{equation}\label{eqn:Mmsum}
  \frac{(-1)^{m/2}}{n^{3m/2}} \(\sum_{\chi\in \mathfrak{M}_m} \chi(f)\) \(\frac{n!}{n^n}\)^3 + O_m\(\frac1n \(\frac{n!}{n^n}\)^3\),
\end{equation}
so we may now concentrate on the sum
\[
  \sum_{\chi \in \mathfrak{M}_m} \chi(f).
\]

We can write
\[
  \sum_{\chi \in \mathfrak{M}_m} \chi(f) = \sum_{|N| = m} \sum_\cP \sum_\chi \chi(f),
\]
where the first sum runs over all subsets $N\subset\{1,\dots,n\}$ of size $m$, the second sums runs over all partitions $\cP$ of $N$ into $m/2$ pairs, and the third sum runs over all $\chi$ with nonzero coordinates exactly on $N$ and killed by the partition $\cP$ and no other. For illustration consider the case $N = \{1,\dots,m\}$ and
\[
  \cP = \{\{1,2\},\dots,\{m-1,m\}\}.
\]
Then the inside sum runs over all choices of
\[
  \chi = (\chi_1,\dots,\chi_m,0^{n-m})
\]
such that $\chi_1,\dots,\chi_m$ are nonzero, $\chi_{2i-1} + \chi_{2i} = 0$ for $i = 1, \dots, m/2$, and moreover such that $\chi_k + \chi_l \neq 0$ unless $\{k,l\} = \{2i-1,2i\}$ for some $i$. However, at the cost of an error of size only $O_m(n^{m/2-1})$ we can just consider the sum over all $\chi_1,\dots,\chi_m$ such that $\chi_{2i-1} + \chi_{2i} = 0$ for each $i$, and because we then have complete character sums we get that
\[
  \sum_\chi \chi(f) = n^{m/2} 1_{f(1) = f(2), \dots, f(m-1) = f(m)} + O_m(n^{m/2-1}).
\]
Similarly, for general $N$ and $\cP$ we have
\begin{equation}
  \sum_\chi \chi(f) = n^{m/2} 1_{f~\text{constant on each}~P\in\cP} + O_m(n^{m/2-1}).\label{eqn:Mmsum-chi-only}
\end{equation}
Now we must sum over $N$ and $\cP$, but by again accepting a negligible error we may simply sum over all ways of choosing $m/2$ pairs from $\{1,\dots,n\}$ \emph{with} replacement. Thus
\begin{align}
  \sum_{|N|=m} \sum_\cP &1_{f~\text{constant on each}~P\in\cP}\nonumber\\
  &= \frac1{(m/2)!2^{m/2}} \sum_{i_1,i_2,\dots,i_{m-1},i_m} 1_{f(i_1) = f(i_2), \dots, f(i_{m-1}) = f(i_m)} + O_m(n^{3m/2-1})\nonumber\\
  &= \frac1{(m/2)!2^{m/2}} \( \sum_{i,j} 1_{f(i)=f(j)}\)^{m/2} + O_m(n^{3m/2-1})\nonumber\\
  &= \frac1{(m/2)!2^{m/2}} \(\sum_{x\in G} |f^{-1}(x)|^2\)^{m/2} + O_m(n^{3m/2-1}).\label{eqn:NPsum}
\end{align}

Thus by combining \eqref{eqn:Mmsum}, \eqref{eqn:Mmsum-chi-only}, and \eqref{eqn:NPsum} we have
\begin{align*}
  \sum_{\chi\in \mathfrak{M}_m} \hat{1_S}(\chi)^3 \chi(f)
  &= \frac1{(m/2)!} \(-\frac1{n^2} \sum_{x\in G} |f^{-1}(x)|^2\)^{m/2} \(\frac{n!}{n^n}\)^3\\
  &\qquad\qquad + O_m\(\frac1n \(\frac{n!}{n^n}\)^3\),
\end{align*}
and this combined with Lemma~\ref{lem:Xm} proves Theorem~\ref{major-arcs-replacement}.

\section{Low-entropy minor arcs}
\label{sec:low-entropy}

In this section we establish a suitable replacement for Theorem~\ref{L-infty-bound} in the even-order case. Our only tool will be the following recursive formula for $\hat{1_S}(\chi)$, which was also our only tool for proving Theorem~\ref{L-infty-bound}. Let $\chi \in \hat{G}^n$ be a character with exactly $m$ nonzero coordinates, say
\[
  \chi = (\chi_1,\dots,\chi_m,0^{n-m}),
\]
where each $\chi_i$ is nonzero. Then
\begin{align}
  \hat{1_S}(\chi)
  &= \frac{(n-m)!}{n^n} \sump_{x_1,\dots,x_{m-1}} \sum_{x_m\neq x_1,\dots,x_{m-1}} \chi_1(x_1) \cdots \chi_m(x_m)\nonumber\\
  &= - \frac{(n-m)!}{n^n} \sump_{x_1,\dots,x_{m-1}} \sum_{x_m = x_1,\dots,x_{m-1}} \chi_1(x_1) \cdots \chi_m(x_m)\nonumber\\
  &= - \frac1{n-m+1} \sum_{i=1}^{m-1} \hat{1_S}(\chi^i),\label{recursion}
\end{align}
where here we write $\chi^i$ for the character
\[
  \chi^i = (\chi_1, \dots, \chi_i + \chi_m, \dots, \chi_{m-1}, 0^{n-m+1}).
\]
Note that each $\chi^i$ has either $m-1$ or $m-2$ nonzero coordinates (depending on whether $\chi_i = - \chi_m$), so repeated application of \eqref{recursion} (and $\hat{1_S}(0) = n!/n^n$) constitutes a method of computing $\hat{1_S}(\chi)$. The more relevant thing for us however is the bound implied by \eqref{recursion}:
\begin{equation}\label{recursive-bound}
  |\hat{1_S}(\chi)| \leq \frac{1}{n-m+1} \sum_{i=1}^{m-1} |\hat{1_S}(\chi^i)|.
\end{equation}
An immediate consequence of this bound is the following lemma, which, although weaker than Theorem~\ref{L-infty-bound} by a factor of roughly $2^{m/2}$, does not rely on the absence of $2$-torsion.

\begin{lemma}\label{elementary-bound}
If $\chi \in \hat{G}^n$ has exactly $m$ nonzero coordinates, where $m \leq n/2$, then
\[
  |\hat{1_S}(\chi)| \leq \binom{n}{m}^{-1/2} \frac{n!}{n^n}.
\]
\end{lemma}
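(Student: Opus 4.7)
The plan is to prove the bound by induction on $m$, using the recursive inequality \eqref{recursive-bound} as the sole ingredient. The base case $m=0$ holds with equality, since $\hat{1_S}(0)=n!/n^n$ and $\binom{n}{0}^{-1/2}=1$.

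For the inductive step, fix $\chi$ with exactly $m$ nonzero coordinates, where $1\leq m\leq n/2$, and examine the recursion. Each character $\chi^i$ appearing on the right has either $m-1$ nonzero coordinates (when $\chi_i+\chi_m\neq 0$) or $m-2$ nonzero coordinates (when $\chi_i+\chi_m=0$). The inductive hypothesis thus bounds $|\hat{1_S}(\chi^i)|$ by either $\binom{n}{m-1}^{-1/2}n!/n^n$ or $\binom{n}{m-2}^{-1/2}n!/n^n$. Since $m-1\leq n/2$, the binomial $\binom{n}{j}$ is increasing on $\{m-2,m-1\}$, so the second is the larger; I will use this worst-case bound uniformly for every term. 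Substituting into \eqref{recursive-bound} then yields
\[
  |\hat{1_S}(\chi)|\leq \frac{m-1}{n-m+1}\binom{n}{m-2}^{-1/2}\frac{n!}{n^n}.
\]

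To close the induction it suffices to verify the purely numerical inequality $(m-1)\binom{n}{m-2}^{-1/2}/(n-m+1)\leq \binom{n}{m}^{-1/2}$. Using the identity $\binom{n}{m}/\binom{n}{m-2}=(n-m+2)(n-m+1)/(m(m-1))$, squaring and cross-multiplying reduces this to $(m-1)(n-m+2)\leq m(n-m+1)$, i.e.\ $n\geq 2m-2$, which is exactly what the hypothesis $m\leq n/2$ guarantees.

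There is no serious obstacle in this plan: it is essentially bookkeeping, and the recursion is tailor-made to make induction on $m$ work. The only delicate point is that I must use the worst-case bound coming from the $\chi_i=-\chi_m$ branches of the recursion, which produce characters with $m-2$ (rather than $m-1$) nonzero coordinates and hence a larger right-hand side; the hypothesis $m\leq n/2$ is just barely enough to absorb this loss in the final numerical inequality.
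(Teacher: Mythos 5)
Your proof is correct and is essentially the paper's own argument: induction on $m$ via the recursive bound \eqref{recursive-bound}, bounding every term by the worst case $\binom{n}{m-2}^{-1/2}\frac{n!}{n^n}$ and then checking the numerical inequality, which is exactly where $m\leq n/2$ enters. The only cosmetic difference is that the paper also records $m=1$ as a base case (where $\hat{1_S}(\chi)=0$), which in your write-up is handled degenerately by the empty sum in the recursion rather than by the displayed $\binom{n}{m-2}$ estimate.
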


\begin{proof}
The claim is true when $m = 0$ (with equality) and when $m=1$ (because $\hat{1_S}(\chi) = 0$), so assume $m > 1$. By \eqref{recursive-bound},
\[
  |\hat{1_S}(\chi)| \leq \frac{m-1}{n-m+1} \max_i |\hat{1_S}(\chi^i)|.
\]
Since each $\chi^i$ has either $m-1$ or $m-2$ nonzero coordinates, by induction (and $m\leq n/2$) we have
\begin{align*}
  |\hat{1_S}(\chi)|
  &\leq \frac{m-1}{n-m+1} \binom{n}{m-2}^{-1/2} \frac{n!}{n^n}\\
  &\leq \(\frac{m(m-1)}{(n-m)(n-m+1)}\)^{1/2} \binom{n}{m-2}^{-1/2} \frac{n!}{n^n}\\
  &= \binom{n}{m}^{-1/2} \frac{n!}{n^n}.\qedhere
\end{align*}
\end{proof}

Unfortunately, there are many approximate equality cases of Lemma~\ref{elementary-bound}. For example if
\[
  \chi = (\chi_0^m, 0^{n-m}),
\]
where $2\chi_0 = 0$,  then
\[
  |\hat{1_S}(\chi)| = \frac{(m-1)(m-3) \cdots 1}{(n-m+1)(n-m+3) \cdots n} \frac{n!}{n^n} \approx \binom{n}{m}^{-1/2} \frac{n!}{n^n}.
\]
This follows from \eqref{recursion}. Moreover we also expect approximate equality whenever $\chi$ approximately has this form, say whenever
\[
  \chi = (\chi_0^{m-k}, \chi_1, \dots, \chi_k, 0^{n-m})
\]
with $2\chi_0 = 0$ and $k$ much smaller than $m$. However, our next theorem asserts that we have an exponential improvement to Lemma~\ref{elementary-bound} whenever $\chi$ does not have essentially this form. This is our replacement for Theorem~\ref{L-infty-bound}.

\begin{theorem}\label{new-theorem}
Let $\chi \in \hat{G}^n$ be a character with exactly $m$ nonzero coordinates, where $m \leq n/2$, and such that no nonzero $2$-torsion coordinate is repeated more than $(1-\delta)m$ times, where $0 \leq \delta \leq 1/2$. Then
\[
  |\hat{1_S}(\chi)| \leq e^{O(m^{3/2}/n^{1/2} + m^{1/2})} (1-\delta)^{m/2} \binom{n}{m}^{-1/2} \frac{n!}{n^n}.
\]
\end{theorem}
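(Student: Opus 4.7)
The plan is to prove the bound by induction on $m$, iterating the recursion \eqref{recursion} while making a strategic choice at each step of which coordinate to designate as $\chi_m$. Writing $F(\chi) := |\hat{1_S}(\chi)| \binom{n}{m}^{1/2} n^n/n!$, Lemma~\ref{elementary-bound} gives the baseline $F(\chi) \le 1$; the goal is to sharpen this by a factor of roughly $(1-\delta)^{m/2}$.

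The key quantitative observation is that, after normalization, an index $i$ for which $\chi^i$ has $m-1$ nonzero coordinates (the case $\chi_i+\chi_m\neq 0$) contributes only $O(1/\sqrt{m(n-m+1)})$ to $F(\chi)$, whereas an index $i$ for which $\chi^i$ has $m-2$ nonzero coordinates contributes $\sim 1/\sqrt{m(m-1)}$ times $F(\chi^i)$. Consequently if $k-1$ of the $m-1$ indices $i$ fall in the bad ($m-2$) case and $m-k$ fall in the good ($m-1$) case, one obtains
\[
  F(\chi) \le \frac{k-1}{\sqrt{m(m-1)}}\sqrt{\frac{n-m+2}{n-m+1}}\max_i F(\chi^i) + O\bigl(\sqrt{m/n}\bigr).
\]
To minimize $k$ I would choose $\chi_m=\psi$ where $\psi$ is a nonzero $2$-torsion coordinate of maximum multiplicity $k\le (1-\delta)m$. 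Since $-\psi=\psi$, the bad case then occurs for exactly $k-1$ of the indices $i$, so the leading coefficient is at most $(k-1)/\sqrt{m(m-1)}\le (1-\delta)+O(1/m)$. Moreover in the bad case $\chi^i$ (which has $m-2$ nonzero coordinates and $\psi$ of multiplicity $k-2$) still satisfies the hypothesis with the same $\delta$, since $(k-2)/(m-2)\le k/m\le 1-\delta$. Iterating over $m/2$ pair-reducing steps, the leading coefficients compound to the desired $(1-\delta)^{m/2}$.

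The main obstacle is the case where $\chi$ has no nonzero $2$-torsion coordinates, since then no natural $\psi$ exists. Here I would argue by first choosing $\chi_m$ to minimize $n_{-\chi_m}$ via a pigeonhole on the involution $\phi\mapsto -\phi$, or by performing an unconditional recursion step that produces a character with at most one $2$-torsion coordinate (so far from dominant) and then reducing to the main case. A secondary technical point is the bookkeeping of how the $2$-torsion structure evolves under a ``good'' single-step: when $\chi_i+\chi_m$ is itself $2$-torsion of high multiplicity, the dominant $2$-torsion element of $\chi^i$ may shift and $\delta$ may drift by $O(1/m)$. Since $1-\delta\ge 1/2$ under the hypothesis, this drift is absorbed into the claimed $e^{O(m^{1/2})}$ factor, while the accumulated $\sqrt{(n-m+2)/(n-m+1)}$ and $O(\sqrt{m/n})$ corrections from the good-case terms combine into $e^{O(m^{3/2}/n^{1/2})}$ over $m/2$ recursion steps, exactly as in the proof of Theorem~\ref{L-infty-bound}.
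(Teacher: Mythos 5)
Your one-step inequality is correct, and your overall framework (iterating \eqref{recursion}, splitting indices according to whether $\chi_i+\chi_m$ vanishes, and choosing $\chi_m$ cleverly) is the same as the paper's. But there is a genuine gap in how you treat the good branch: you bound every $(m-1)$-sparse character $\chi^i$ by the unconditional Lemma~\ref{elementary-bound}, so these branches enter only as an additive error $O(\sqrt{m/n})$ per step, giving a recursion of the shape $F_m \le (1-\delta+O(1/m))\,F_{m-2} + O(\sqrt{m/n})$. Iterating such a recursion can never beat the very first additive error you commit: the best it yields is roughly $F_m \lesssim (1-\delta)^{m/2} + \sqrt{m/n}\cdot\min(m,1/\delta)$, and the second term is \emph{not} bounded by $e^{O(m^{3/2}/n^{1/2}+m^{1/2})}(1-\delta)^{m/2}$. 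Concretely, take $\delta=1/2$ and $m=n^{0.1}$: the theorem asserts $F(\chi)\le e^{O(n^{0.05})}2^{-n^{0.1}/2}$, which is smaller than any power of $n$, while your argument gives only $F(\chi)=O(n^{-0.45})$. The same loss would also wreck the intended application (Corollary~\ref{low-entropy}), where $\delta\asymp\log(n/m)/\log n$ and the whole point is a saving exponentially small in $\delta m$, far below any fixed power of $m/n$.

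The fix is to retain the $(1-\delta)$-type saving on the $(m-1)$-sparse branch as well, i.e.\ to run a coupled induction in both $U_{m-1}$ and $U_{m-2}$ rather than discharging the good branch trivially. This is exactly what the paper does: it defines the class $X_m$ with the multiplicity threshold relaxed to $(1-\delta)m+3$ (so that the class is closed under the recursion), sets $U_m=\max_{\chi\in X_m}|\hat{1_S}(\chi)|$, and pins $\chi_m$ to be the \emph{majority} coordinate rather than a $2$-torsion coordinate of maximal multiplicity; this single normalization simultaneously bounds the number of bad indices by $(1-\delta)m+2$ (more than half of the $\chi_i$ can equal $-\chi_m$ only if $\chi_m$ is $2$-torsion, where the hypothesis bites), keeps every $\chi^i$ in $X_{m-1}\cup X_{m-2}$, and makes your separate ``no nonzero $2$-torsion coordinate'' case and the $\delta$-drift bookkeeping unnecessary. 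One then gets
\[
  U_m \le \frac{m-1}{n-m+1}\,\max\Bigl(U_{m-1},\ \frac{\delta m-3}{m-1}\,U_{m-1}+\frac{(1-\delta)m+2}{m-1}\,U_{m-2}\Bigr),
\]
and the point of the proof is the analysis of this two-term recurrence: renormalize by $(\alpha_1\cdots\alpha_m)^{1/2}$ and bound the resulting product of $2\times2$ transfer matrices in operator norm, which is precisely where the factor $e^{O(m^{3/2}/n^{1/2}+m^{1/2})}$ comes from. Your closing remark that the corrections ``combine exactly as in Theorem~\ref{L-infty-bound}'' presupposes this matrix (or an equivalent two-variable) analysis; as written, your scheme replaces it with something strictly weaker.
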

\begin{proof}
Fixing $\delta$, let $X_m$ be the set of $\chi\in\hat{G}^n$ with exactly $m$ nonzero coordinates and such that no nonzero $2$-torsion coordinate is repeated more than $(1-\delta)m + 3$ times. This set $X_m$ includes all the $\chi$ covered by the theorem. (We allow the extra $3$ for technical reasons to do with the induction.) Let
\[
  U_m = \max_{\chi \in X_m} |\hat{1_S}(\chi)|.
\]
We seek a bound for $U_m$.

Let $\chi\in X_m$. We may assume by permuting coordinates that
\[
  \chi = (\chi_1,\dots,\chi_m,0^{n-m}).
\]
We also assume, by permuting coordinates if necesary, the following: if $\chi$ has a majority coordinate (i.e., if more than half of the nonzero coordinates $\chi_1, \dots, \chi_m$ are equal to some particular $\chi_0$) then $\chi_m$ is the majority coodinate.

Now suppose we apply \eqref{recursive-bound}. Consider the resulting characters $\chi^i$:
\[
  \chi^i = (\chi_1, \dots, \chi_i + \chi_m, \dots, \chi_{m-1}, 0^{n-m+1}).
\]
We claim that $\chi^i \in X_{m-1} \cup X_{m-2}$. To see this, note first that coordinates other than $\chi_m$ are repeated in $\chi^i$ at most
\[
  m/2 + 1 \leq (1-\delta)(m-2) + 3
\]
times, so we need only worry about $\chi_m$. If $2\chi_m \neq 0$ then there is nothing to worry about, so assume $2\chi_m = 0$. Suppose $\chi_m$ repeats $k$ times in $\chi$. Then depending on whether $\chi_i = \chi_m$, $\chi^i$ has either $m-1$ nonzero coordinates and $k-1$ repetitions of $\chi_m$, or $m-2$ nonzero coordinates and $k-2$ repetitions of $\chi_m$. Both of these possibilities are fine: since $k \leq (1-\delta)m + 3$ we have
\begin{align*}
  k - 1 & \leq (1-\delta)(m-1) + 3,\\
  k - 2 &\leq (1-\delta)(m-2) + 3.
\end{align*}
Thus indeed either $\chi^i \in X_{m-1}$ or $\chi^i \in X_{m-2}$.

For how many $i$ can we have $\chi^i \in X_{m-2}$? Recall that $\chi^i \in X_{m-2}$ if and only if $\chi_i = - \chi_m$. If more than half of the indices $i$ satisfy $\chi_i = - \chi_m$ then $-\chi_m$ is a majority coordinate. By arrangement, this can only happen if $-\chi_m = \chi_m$, i.e., if $\chi_m$ is $2$-torsion. By definition of $X_m$, then, $\chi_m$ is repeated at most $(1-\delta)m + 3$ times. Thus there can be at most $(1-\delta)m + 2$ indices $i$ for which $\chi^i \in X_{m-2}$.

Putting this all together with \eqref{recursive-bound}, we have
\begin{equation}\label{Um-recurrence}
  U_m \leq \frac{m-1}{n-m+1} \max\(U_{m-1},\frac{\delta m - 3}{m-1} U_{m-1} + \frac{(1-\delta) m + 2}{m-1} U_{m-2}\)
\end{equation}
for all $m \geq 2$. This recurrence is very similar to the one we analyzed in \cite[Section~6]{EMM}. We use the same method here. 

In \eqref{Um-recurrence}, the $\max$ covers the possibility that $U_{m-1} > U_{m-2}$, but typically we expect $U_{m-1}$ to be much smaller than $U_{m-2}$, so in fact only the term involving $U_{m-2}$ should be important. To take advantage of this heuristic, we renormalize. Write
\[
  \alpha_m = \frac{(1-\delta)m+2}{n-m+1},\quad
  \beta_m = \frac{\delta m-3}{n-m+1},\quad
  \gamma_m = \frac{m-1}{n-m+1},
\]
and define
\[
  V_m=(\alpha_1\alpha_2\dots \alpha_m)^{-1/2} U_m.
\]
Then \eqref{Um-recurrence} becomes
\begin{equation}\label{Vm-recurrence}
  V_m \leq \max \(\gamma_m\alpha_m^{-1/2}\, V_{m-1},\  \alpha_m^{1/2}\alpha_{m-1}^{-1/2}\, V_{m-2} + \beta_m\alpha_{m}^{-1/2}\, V_{m-1} \).
\end{equation}
If we now define
\begin{align*}
  v_m &= \(\begin{array}{c}V_m\\ V_{m-1}\end{array}\),\\
  M_m &= \(\begin{array}{cc}\gamma_m \alpha_m^{-1/2}&0\\1&0\end{array}\),\\
  N_m &= \(\begin{array}{cc}\beta_m \alpha_m^{-1/2} & \alpha_m^{1/2} \alpha_{m-1}^{-1/2} \\ 1 & 0 \end{array}\),
\end{align*}
then we can write \eqref{Vm-recurrence} equivalently as
\[
  \|v_m\|_2 \leq \max\(\|M_m v_{m-1}\|_2, \|N_mv_{m-1}\|_2 \).
\]
To finish we bound the operator norms
\[
  \|M_m\|_{2\to2},\quad\|N_m\|_{2\to2},
\]
and then use the bound
\begin{align}
  |V_m|
  &\leq \|v_m\|_2\nonumber\\
  &\leq \prod_{r=2}^m \max\(\|M_m\|_{2\to 2}, \|N_m\|_{2\to 2}\) \|v_1\|_2\nonumber\\
  &= \prod_{r=2}^m \max\(\|M_m\|_{2\to 2}, \|N_m\|_{2\to 2}\) \frac{n!}{n^n}.\label{operatornormbound}
\end{align}
Since $\alpha_m\alpha_{m-1}^{-1} = 1+O(1/m)$ and $\beta_m^2\alpha_m^{-1} = O(m/n)$, we have
\[
  \tr\left(N_m^T N_m\right) = 2 + O(m/n + 1/m)
\]
and
\[
  \det\left(N_m^T N_m\right) = 1 + O(1/m).
\]
Thus (because $\|N_m\|_{2\to 2}^2$ is equal to the larger eigenvalue of $N_m^T N_m$) we have
\[
  \|N_m\|_{2\to 2} = 1 + O(m^{1/2}/n^{1/2} + 1/m^{1/2}).
\]
Similarly, because $\gamma_m^2\alpha_m^{-1} = O(m/n)$, we have
\[
  \|M_m\|_{2\to 2} = 1 + O(m/n).
\]
Thus from \eqref{operatornormbound} we have
\begin{align*}
  |V_m|
  &\leq \prod_{r=2}^m \(1 + O(r^{1/2}/n^{1/2} + 1/r^{1/2})\) \frac{n!}{n^n}\\
  &= e^{O(m^{3/2}/n^{1/2} + m^{1/2})} \frac{n!}{n^n}.
\end{align*}
Since
\[
  \alpha_1 \cdots \alpha_m = \prod_{r=1}^m \( (1 + O(1/r))\frac{(1-\delta)r}{n-r+1}\) = e^{O(\log m)} (1-\delta)^m \binom{n}{m}^{-1},
\]
the theorem follows.
\end{proof}

\begin{corollary}\label{low-entropy}
Assume $0\leq \delta\leq 1/2$. For $m\leq n/2$ we have
\begin{align*}
  \sum_{m\text{-sparse}~\chi} |\hat{1_S}(\chi)|^3
  &\leq e^{O(m^{3/2}/n^{1/2} + m^{1/2})} (1-\delta)^{m/2} \(\frac{n!}{n^n}\)^3\\
  &\qquad + 2^m n^{\delta m + 1} \binom{n}{m}^{-1/2} \(\frac{n!}{n^n}\)^3.
\end{align*}
Thus, for some constants $\eta < 1$ and $M_0$, for all $M\geq M_0$ we have
\[
  \sum_{m=M}^{n/(\log n)^2} \sum_{m\text{-sparse}~\chi} |\hat{1_S}(\chi)|^3 \leq O\(\eta^M \(\frac{n!}{n^n}\)^3\) + O\( e^{-cn^{1/2}} \(\frac{n!}{n^n}\)^3\).
\]
\end{corollary}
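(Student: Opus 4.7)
For the per-$m$ bound, I partition the $m$-sparse characters into two classes based on whether the hypothesis of Theorem~\ref{new-theorem} holds. Class A consists of $\chi$ in which no nonzero $2$-torsion coordinate is repeated more than $(1-\delta)m$ times; class B is the complement (for $\delta<1/2$, each such $\chi$ has a unique dominant $2$-torsion coordinate $\chi_0$ appearing more than $(1-\delta)m$ times). For class A, I bound $\sum_{\chi\in A}|\hat{1_S}(\chi)|^3\leq (\max_{\chi\in A}|\hat{1_S}(\chi)|)\sum_{m\text{-sparse}}|\hat{1_S}(\chi)|^2$ using Theorem~\ref{new-theorem} for the $L^\infty$ factor and Theorem~\ref{sparseval} for the $L^2$ factor; after absorbing the $O(m^{1/4})$ into $e^{O(m^{1/2})}$ this yields the first summand.

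For class B, I enumerate: at most $n$ choices for $\chi_0\in\hat G[2]\setminus\{0\}$, at most $\binom{n}{m}\binom{m}{k}$ position choices for each multiplicity $k$, and at most $(n-1)^{m-k}$ choices for the remaining nonzero values. Substituting $j=m-k<\delta m$, the count is at most $n\binom{n}{m}\sum_{j<\delta m}\binom{m}{j}n^j$. Since the terms of the inner sum are geometric with ratio $\geq n(1-\delta)/\delta \gg 1$, it is dominated by the top term, giving $\sum_{j<\delta m}\binom{m}{j}n^j\leq 2^{m+1}n^{\delta m}$ via $\binom{m}{j}\leq 2^m$. Multiplying by $\binom{n}{m}^{-3/2}(n!/n^n)^3$ from Lemma~\ref{elementary-bound} produces the second summand.

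For the second assertion I would choose $\delta=\delta(m)$ varying across the range. In the small-$m$ regime (say $m\leq m_1$ where $m_1\ll\sqrt n$), a fixed small positive $\delta$ makes the first summand decay geometrically ($(1-\delta)^{m/2}$ dominates $e^{O(m^{3/2}/n^{1/2}+m^{1/2})}$ for $m\geq M_0$ constant and $n$ large), and the contribution sums to $O(\eta^M)$; in this regime, the second summand is controlled because $\binom{n}{m}^{-1/2}\leq O(m^{1/4})(em/n)^{m/2}$ makes $n\cdot n^{\delta m}\binom{n}{m}^{-1/2}$ small for $m$ beyond a constant threshold. In the large-$m$ regime ($m_1<m\leq n/(\log n)^2$), I would take $\delta(m)\asymp\log(n/m)/\log n$, chosen to balance the two summands; with this choice $n^{\delta m}\binom{n}{m}^{-1/2}\leq e^{-\Omega(m)}$ and the first summand gives $e^{-\Omega(m\log\log n/\log n)}$ at the upper end, which at $m=n/(\log n)^2$ is $e^{-\Omega(n\log\log n/(\log n)^3)}\ll e^{-cn^{1/2}}$, so that summing over the (polynomially many) values of $m$ in this regime contributes at most $O(e^{-cn^{1/2}})$ after shrinking $c$.

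The main obstacle is this last step: balancing the two summands via $\delta(m)$ uniformly across $[M,n/(\log n)^2]$ while controlling the polynomial-in-$n$ prefactors (particularly the factor of $n$ coming from the count of nonzero $2$-torsion characters in class B, which for $G=\F_2^d$ really is as large as $n-1$). The key observation is that $\binom{n}{m}^{-1/2}$ decays super-polynomially in $n$ once $m$ is a large constant, absorbing these prefactors; but making this explicit with the right choice of $\delta(m)$ throughout the range requires careful Stirling-type estimates.
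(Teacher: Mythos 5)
Your per-$m$ bound is exactly the paper's argument: the same dichotomy (characters satisfying the hypothesis of Theorem~\ref{new-theorem} versus those with a dominant nonzero $2$-torsion coordinate repeated more than $(1-\delta)m$ times), the same $L^\infty\times L^2$ splitting using Theorem~\ref{new-theorem} for the maximum and Theorem~\ref{sparseval} for the $L^2$ sum (absorbing $O(m^{1/4})$ into $e^{O(m^{1/2})}$), and the same count of the exceptional class combined with the cube of Lemma~\ref{elementary-bound}. One cosmetic point: your geometric-domination estimate produces $2^{m+1}n^{\delta m+1}\binom{n}{m}^{-1/2}$ rather than the stated $2^m n^{\delta m+1}\binom{n}{m}^{-1/2}$; bounding the number of admissible subsets of size at most $\delta m$ simply by $2^m$ gives the cleaner constant.

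For the second assertion your choice of $\delta$ is also essentially the paper's ($\delta(m)\asymp\log(n/m)/\log n$; a fixed small $\delta$ for $m\le m_1\ll\sqrt n$ agrees with this up to constants in that range), but the final accounting has a genuine slip. Since $m\mapsto m\log(n/m)$ is increasing on the range in question, the per-$m$ bound $\exp(-\Omega(m\log(n/m)/\log n))$ in your upper regime is worst at its \emph{lower} end $m\approx m_1$, not at $m=n/(\log n)^2$; with $m_1\ll\sqrt n$ it is only $e^{-\Omega(m_1)}$ there, which is far larger than $e^{-cn^{1/2}}$, so the upper regime does not contribute $O(e^{-cn^{1/2}})$ as you assert by evaluating only at the top endpoint. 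The conclusion is still recoverable from your estimates with correct bookkeeping: for $m\le n^{0.6}$ (say) one has $\log(n/m)\ge 0.4\log n$, so the per-$m$ bound is at most $\eta^m$ and these terms belong in the geometric $O(\eta^M)$ term, while for $n^{0.6}\le m\le n/(\log n)^2$ each term is at most $\exp(-c'n^{0.6}/\log n)\ll e^{-cn^{1/2}}$ and there are at most $n$ of them. This split at $n^{0.6}$ is exactly how the paper concludes; equivalently, you could move your crossover $m_1$ to a power of $n$ strictly above $n^{1/2}$ rather than below it.
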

\begin{proof}
Again denote by $X_m$ the set of characters to which Theorem~\ref{new-theorem} applies. By combining Theorem~\ref{new-theorem} with Theorem~\ref{sparseval} we have
\begin{align*}
  \sum_{\chi\in X_m} |\hat{1_S}(\chi)|^3
  &\leq \max_{\chi\in X_m} |\hat{1_S}(\chi)| \sum_{\chi\in X_m} |\hat{1_S}(\chi)|^2\\
  &\leq e^{O(m^{3/2}/n^{1/2} + m^{1/2})} (1 - \delta)^{m/2} \(\frac{n!}{n^n}\)^3.
\end{align*}
This is the first term in the claim. On the other hand the number of $m$-sparse $\chi$ not in $X_m$ is at most
\[
  \binom{n}{m} 2^m n^{\delta m + 1}
\]
(you choose $m$ coordinates to be nonzero, some subset of at most $\delta m$ of these to be different from $\chi_0$, and finally the values of the coordinates). Thus by Lemma~\ref{elementary-bound} we have
\[
  \sum_{\substack{m\text{-sparse}~\chi\\\chi\notin X_m}} |\hat{1_S}(\chi)|^3 \leq 2^m n^{\delta m+1} \binom{n}{m}^{-1/2} \(\frac{n!}{n^n}\)^3.
\]

To prove the second part of the corollary, consider the second term first. Using the simple bound $\binom{n}{m} \geq (n/m)^m$, we have
\begin{align*}
  \log\(2^m n^{\delta m+1} \binom{n}{m}^{-1/2}\)
  &\leq \log\(2^m n^{\delta m+1} (n/m)^{-m/2}\)\\
  &= O(m + \log n) + \delta m \log n - (m/2) \log(n/m).
\end{align*}
As long as $n/m$ and $m$ are both larger than a sufficiently large constant (depending on the constant implicit in the $O$ above), then $m + \log n$ is negligible compared to $(m/2) \log (n/m)$, so if we put
\[
  \delta = c \log(n/m)/\log n
\]
for a sufficiently small constant $c$ then we have
\[
  \log\(2^m n^{\delta m+1} \binom{n}{m}^{-1/2}\) \leq -(m/4) \log (n/m).
\]

Now consider the first term. we have
\begin{align*}
  \log\(e^{O(m^{3/2}/n^{1/2} + m^{1/2})} (1-\delta)^{m/2}\)
  \leq O(m^{3/2}/n^{1/2} + m^{1/2}) - \delta m/2\\
  \qquad\qquad= \(-(c/2) \frac{\log(n/m)}{\log n} + O((m/n)^{1/2} + m^{-1/2})\) m.
\end{align*}
As long as $1\leq m = o(n (\log\log n)^2/(\log n)^2)$ it is easy to see that
\[
  (m/n)^{1/2} = o\( \frac{\log(n/m)}{\log n}\):
\]
indeed, this is equivalent to
\[
  \log n = o((n/m)^{1/2} \log (n/m)).
\]
It is also easy to see that
\[
  m^{-1/2} = o\( \frac{\log(n/m)}{\log n}\)
\]
as long as $m\to\infty$ (consider the cases $m\leq n^{1/2}$ and $m\geq n^{1/2}$ separately). Thus we have 
\[
  \log\(e^{O(m^{3/2}/n^{1/2} + m^{1/2})} (1-\delta)^{m/2}\)
  \leq -(c/4) \frac{\log(n/m)}{\log n} m
\]
as long as $m\geq M_0$ for some constant $M_0 = M_0(c)$.

Putting these two bounds together, as long as $M_0\leq m \leq n/(\log n)^2$ we have
\[
  \sum_{m\text{-sparse}~\chi} |\hat{1_S}(\chi)|^3 \leq \exp\(-c' m \frac{\log(n/m)}{\log n}\) \(\frac{n!}{n^n}\)^3.
\]
In particular if $m \geq n^{0.6}$ we have
\[
  \sum_{m\text{-sparse}~\chi} |\hat{1_S}(\chi)|^3 \leq \exp\(-c' n^{0.6}/\log n\) \(\frac{n!}{n^n}\)^3.
\]
In this range we can afford to simply take the maximum over $m$ and accept an additional factor of $n$. On the other hand if $m \leq n^{0.6}$ we have
\[
  \sum_{m\text{-sparse}~\chi} |\hat{1_S}(\chi)|^3 \leq \exp\( - 0.6 c' m\) \(\frac{n!}{n^n}\)^3.
\]
The sum of this expression over $m\geq M$ is a geometric series and thus dominated by the $m = M$ term. This proves the corollary.
\end{proof}

\section{High-entropy minor arcs}
\label{sec:high-entropy}

In this section we extend the proof of \eqref{high-entropy bound} to cover a wider entropy range. This mainly requires more careful counting. The methods of this section are effective for characters with entropy as low as $n^{-1/2 + \epsilon}$, but for simplicity we only consider entropy as low as $(\log n)^{-100}$.

We first need a lemma relating our definition of entropy to the usual one.

\begin{lemma}\label{entropy}
Let $H = H(\chi) = \frac1n \log \binom{n}{a_1,\dots,a_n}$. If $H \geq (\log n)^{-100}$ then
\[
  H = \(1 + O\(\frac{\log\log n}{\log n}\)\) \sum_{i=1}^n \frac{a_i}{n} \log \frac{n}{a_i}.
\]
\end{lemma}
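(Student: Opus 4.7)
The plan is to apply Stirling's formula to $\log\binom{n}{a_1,\ldots,a_n}$ and show that every deviation from $E := \sum_i a_i \log(n/a_i)$ is $O(E\log\log n/\log n)$. Using $\log m! = m\log m - m + \tfrac12\log m + O(1)$ for $m \geq 1$ (with $\log 0! = 0$) together with $\sum_i a_i = n$, the expansion should read
\[
  nH = E + \tfrac12\log n - \tfrac12 \sum_{a_i \geq 1}\log a_i + O(k),
\]
where $k$ denotes the number of $i$ with $a_i \geq 1$. It thus suffices to verify that each of $\log n$, $k$, and $\sum_{a_i\geq 1}\log a_i$ is $O(E\log\log n/\log n)$.

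The first two are immediate from the hypothesis $H \geq (\log n)^{-100}$. For $\log n$: the hypothesis forces $E = nH \geq n(\log n)^{-100}$, which dominates $(\log n)^2$, so $\log n = O(E\log\log n/\log n)$. For $k$: since $a\mapsto a\log(n/a)$ is concave on $[1,n]$, the quantity $E$ restricted to the simplex $\{a\in[1,n]^k : \sum a_i = n\}$ is minimized at a vertex, namely one coordinate equal to $n-k+1$ and the remaining $k-1$ equal to $1$; this gives $E \geq (k-1)\log n$, hence $k = O(E/\log n)$.

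The main step is controlling $\sum_{a_i\geq 1}\log a_i$. The naive Jensen bound $\sum\log a_i \leq k\log(n/k)$ is too weak here, because it can be the same order as $E$. Instead I would use the pointwise inequality that for every $a \in [1,n/2]$,
\[
  \log a \leq \frac{C}{\log n}\, a\log(n/a)
\]
for some absolute constant $C$. Setting $a = e^t$, this reduces to checking that $t/(e^t(\log n - t)) = O(1/\log n)$ uniformly on $t \in [0,\log(n/2)]$, which is elementary; the supremum is attained near $t = 1$. The at most two indices with $a_i > n/2$ contribute only $O(\log n)$ and are already handled. Summing the pointwise bound gives $\sum_{a_i\geq 1}\log a_i = O(E/\log n + \log n)$.

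Combining the three estimates yields $|nH - E| = O(E/\log n + \log n) = O(E\log\log n/\log n)$, which rearranges to the claimed formula. The main potential obstacle is the $\sum\log a_i$ term: vertex-like configurations (one very large $a_i$ with the rest equal to $1$) saturate the Jensen bound, so only the pointwise comparison of $\log a$ with $a\log(n/a)$ saves us. It is essentially this saturation that explains why a lower bound on $H$ is needed at all: for $H$ much smaller than $(\log n)^{-100}$, the $\tfrac12\sum\log a_i$ Stirling correction is genuinely of the same size as $E$ and the approximation fails.
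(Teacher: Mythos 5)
Your argument is correct in outline and, although it starts from the same Stirling expansion as the paper, the way you control the error is genuinely different. The paper bounds the Stirling correction by Jensen's inequality, $\sum_{a_i>0}\log(a_i+2)\le k\log\bigl(\tfrac{n}{k}+2\bigr)$, and then combines this with the bound $k=O(Hn/\log n)$ on the number of nonzero parts and with the hypothesis $H\ge(\log n)^{-100}$, which is what makes $\log(\tfrac nk+2)=O(\log\log n+\log H^{-1})=O(\log\log n)$. Your pointwise inequality $\log a\le \frac{C}{\log n}\,a\log(n/a)$ for $1\le a\le n/2$ replaces that step; it is correct (the check at $a=e^t$ is as you say, with the supremum near $t=1$ of size about $1/(e\log n)$), and it buys you the sharper relative error $O(1/\log n)$ rather than $O(\log\log n/\log n)$, with the entropy hypothesis then needed only to absorb the additive $O(\log n)$ term. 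Your vertex/concavity argument giving $E\ge (k-1)\log n$ is also fine and plays the role of the paper's lower bound $\binom{n}{a_1,\dots,a_n}\ge n!/(n-k+1)!$, except that you bound $k$ in terms of $E$ where the paper bounds it in terms of $H$; both are adequate.

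One step does need patching: the line ``$E=nH\ge n(\log n)^{-100}$'' assumes the conclusion you are proving. What you actually need is only the one-sided inequality $nH\le E$, which is standard: by the multinomial theorem, $n^n=(a_1+\cdots+a_n)^n\ge\binom{n}{a_1,\dots,a_n}\prod_i a_i^{a_i}$, hence $\log\binom{n}{a_1,\dots,a_n}\le\sum_i a_i\log(n/a_i)=E$. (Alternatively, bootstrap from your own expansion: $nH\le E+\tfrac12\log n+O(k)$ and $k\le 1+E/\log n$ give $E\ge cnH$ once $nH\gg\log n$, which the hypothesis guarantees.) With that one line added, your estimate $|nH-E|=O(E/\log n+\log n)$ together with $E\ge nH\ge n(\log n)^{-100}\gg(\log n)^2$ completes the proof, and indeed yields a slightly stronger form of the lemma than stated.
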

\begin{proof}
We use Stirling's formula in the form
\[
  \log m! = m \log m - m + O(\log (m+2)) \qquad (m\geq 0).
\]
This implies
\begin{equation}\label{entropy-eqn}
  \frac1n \log\binom{n}{a_1,\dots,a_n}
  = \sum_{a_i > 0}\(\frac{a_i}{n}\log \frac{n}{a_i} + O\(\frac{\log (a_i + 2)}n\)\) + O\(\frac{\log n}{n}\).
\end{equation}
Suppose $k$ of the $a_i$ are nonzero. Note first of all that \eqref{entropy-eqn} implies a bound on $k$: we have
\begin{align*}
  H
  &= \frac1n \log \binom{n}{a_1,\dots,a_n}\\
  &\geq \frac1n \log \(\frac{n!}{(n-k+1)! 1^{k-1}}\)\\
  &= \frac{n-k+1}n \log\frac{n}{n-k+1} + \frac{k-1}n\log n + O\(\frac{k}n + \frac{\log n}n\)\\
  &= \frac{k}n \log n + O\( \frac{k}n + \frac{\log n}n\).
\end{align*}
This implies $k \leq O(Hn / \log n)$. Thus by concavity of $\log$ we have
\begin{align*}
  \frac1n \sum_{a_i>0} \log(a_i+2)
  &\leq \frac{k}{n} \log\(\frac1k \sum_{a_i>0} (a_i+2)\)\\
  &= \frac{k}{n} \log \(\frac{n}{k} + 2\)\\
  &\leq O\(\frac{H}{\log n} \( \log \log n + \log H^{-1}\)\)\\
  &= O\(\frac{H \log \log n}{\log n}\).
\end{align*}
In the last line we used the assumption $H \geq (\log n)^{-100}$. Inserting this into \eqref{entropy-eqn} gives
\[
  H = \sum_{i=1}^n \frac{a_i}n \log \frac{n}{a_i} + O\( \frac{H\log\log n}{\log n}\),
\]
which is equivalent to the claim.
\end{proof}

\begin{lemma}\label{orbit-count}
Suppose $H \geq (\log n)^{-100}$. Then the number of characters $\chi\in\hat{G}^n$ of entropy at most $H$ is bounded by
\[
  \exp\(Hn + O\(\frac{H n \log \log n}{\log n}\)\).
\]
\end{lemma}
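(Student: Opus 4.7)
My plan is to count $\chi \in \hat{G}^n$ by \emph{type}, where a type is a multiset $\{(\psi_i,a_i)\}_{i=1}^k$ with $\psi_1,\dots,\psi_k\in\hat{G}$ distinct, $a_i\geq 1$, and $a_1+\cdots+a_k=n$. Every $\chi$ has a unique type, and the number of $\chi$ of a given type is the multinomial $\binom{n}{a_1,\dots,a_k}$, which is at most $e^{Hn}$ by the definition of $H(\chi)$. So the problem reduces to counting types of entropy at most $H$.

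First I would reuse the calculation inside the proof of Lemma~\ref{entropy}: that argument already establishes the intermediate inequality $H \geq (k/n)\log n + O(k/n + (\log n)/n)$, which for $H \geq (\log n)^{-100}$ gives the bound $k \leq K := O(Hn/\log n)$. Next I crudely count types: for each $k$, the number of types with exactly $k$ nonzero parts is at most
\[
  \binom{n}{k}\binom{n-1}{k-1} \leq \binom{n}{k}^2
\]
(choose the support inside $\hat{G}$, then a positive composition of $n$ into $k$ parts). Combining,
\[
  \#\{\chi\in\hat{G}^n : H(\chi)\leq H\} \leq \sum_{k=1}^{K}\binom{n}{k}^2 e^{Hn}.
\]

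The final and most sensitive step is to show $\binom{n}{k}^2 \leq e^{O(Hn\log\log n/\log n)}$ uniformly for $k\leq K$. Using $\binom{n}{k}\leq (en/k)^k$ gives $\log\binom{n}{k}^2 \leq 2k\log(en/k)$, which (since $\frac{d}{dk}[k\log(en/k)] = \log(n/k) \geq 0$) is increasing in $k$ on $[1,n]$, so the maximum over $k\leq K$ is attained at $k=K \sim Hn/\log n$. Evaluating there,
\[
  2K\log(en/K) = O\!\(\frac{Hn}{\log n}\)\(\log\log n + \log(1/H) + O(1)\) = O\!\(\frac{Hn\log\log n}{\log n}\),
\]
where $\log(1/H)\leq 100\log\log n$ comes from the hypothesis on $H$. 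The outer sum over $k$ contributes an extra factor $\leq K \leq n$ whose logarithm $O(\log n)$ is also absorbed into the same error, since $Hn \geq n/(\log n)^{100}$ is much larger than $(\log n)^2/\log\log n$. I do not anticipate a serious obstacle: the only delicate point is precisely this use of $H\geq(\log n)^{-100}$ to tame $\log(1/H)$, and the rest is elementary counting.
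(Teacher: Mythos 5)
Your argument is correct, and its first step coincides with the paper's: each type (i.e.\ orbit of $\chi$ under permuting coordinates) contains at most $e^{Hn}$ characters, so everything reduces to showing that the number of types of entropy at most $H$ is $\exp(O(Hn\log\log n/\log n))$. Where you genuinely differ is in how that orbit count is carried out. The paper controls the total mass $t=\sum_{a_i\leq n^{1/2}}a_i$ of the small multiplicities, shows $t=O(Hn/\log n)$ via Lemma~\ref{entropy}, and then counts orbits by choosing the at most $n^{1/2}$ large multiplicities (an $n^{O(n^{1/2})}$ factor, itself absorbed using $H\geq(\log n)^{-100}$) and the small ones by stars and bars. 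You instead control the number $k$ of distinct coordinate values, $k=O(Hn/\log n)$ --- which, as you say, is already established inside the proof of Lemma~\ref{entropy} --- and count types as a support in $\hat{G}$ times a composition of $n$, giving $\binom{n}{k}\binom{n-1}{k-1}\leq\exp(O(k\log(en/k)))$; the hypothesis $H\geq(\log n)^{-100}$ enters only through $\log(1/H)=O(\log\log n)$, exactly parallel to the paper's $\log(\log n/H)$ term. Both routes yield the same error term, and yours is if anything a little cleaner, avoiding the $n^{1/2}$ threshold and the $n^{O(n^{1/2})}$ factor. The one point worth making explicit is the trivial cap $k\leq n$: your monotonicity argument evaluates $2k\log(en/k)$ at $k=K$, and if the implied constant ever pushes $K$ past $n$ then necessarily $H\gg\log n$, in which case the value at $k=n$, namely $2n$, is already $O(Hn\log\log n/\log n)$, so nothing is lost.
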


The important part of the conclusion is that the bound has the form $e^{Hn + o(Hn)}$.

\begin{proof}
If $\chi$ has entropy $H$ then the orbit of $\chi$ under permutation of coordinates has size $e^{Hn}$, so it suffices to show that the number of orbits is at most
\[
  \exp\( O\(\frac{H n \log \log n}{\log n}\)\).
\]
Choosing an orbit is equivalent to fixing the multiplicities $a_i$ of each coordinate, so we are in the business of counting solutions to
\begin{equation}\label{chi-orbit}
\begin{cases}
a_1 + \cdots + a_n &= n,\\
\frac1n \log \binom{n}{a_1,\dots,a_n} &\leq H.
\end{cases}
\end{equation}

Let $t$ be the sum of the $a_i$ for which $a_i\leq n^{1/2}$, and note that at most $n^{1/2}$ of the $a_i$ are not included in this set. Let
\[
  H' = \sum_{i=1}^n \frac{a_i}n \log \frac{n}{a_i}.
\]
Then
\[
  H' \geq \frac{t}{n} \log n^{1/2} = \frac{t}{2n} \log n.
\]
On the other hand Lemma~\ref{entropy} implies $H' \leq O(H)$. Thus
\[
  t \leq O\(\frac{H n}{\log n}\).
\]

Now we can select a solution to \eqref{chi-orbit} by choosing up to $n^{1/2}$ indices $i$ at which we will have $a_i > n^{1/2}$, choosing the values of these $a_i$, and then choosing the values of all other $a_i$ in such a way that their sum does not exceed $O(H n / \log n)$. Thus the number of solutions to \eqref{chi-orbit} is at most
\begin{align*}
  n^{O(n^{1/2})} \binom{n + O( H n / \log n)}{O(H n / \log n)}
  &= n^{O(n^{1/2})} \exp\(O\( \frac{Hn}{\log n}\log \frac{\log n}{H}\)\)\\
  &= \exp\(O\(\frac{\log\log n}{\log n} + \frac{\log H^{-1}}{\log n}\) Hn\)\\
  &= \exp\(O\(\frac{H n \log \log n}{\log n}\)\).\qedhere
\end{align*}
\end{proof}

\begin{theorem}\label{med-to-high-entropy}
We have the following bound over high-entropy characters:
\[
  \sum_{H(\chi) \geq (\log n)^{-100}} |\hat{1_S}(\chi)|^3 \leq e^{-c n (\log n)^{-100}} \(\frac{n!}{n^n}\)^3.
\]
\end{theorem}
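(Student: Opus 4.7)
The plan is to dyadically decompose the range $H(\chi) \geq (\log n)^{-100}$. Set $H_0 = (\log n)^{-100}$ and consider the pieces $H(\chi) \in [H, 2H]$ for $H = 2^j H_0$, $j = 0, 1, 2, \dots$, up to the maximum entropy $O(\log n)$. On each piece I combine Lemma~\ref{orbit-count} with the square-root cancellation bound (Theorem~\ref{square-root-cancellation}).

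Fix such an $H$, and fix an orbit of $\chi = (\psi_1^{a_1},\dots,\psi_k^{a_k})$ under permutation of coordinates. The orbit has size exactly $\binom{n}{a_1,\dots,a_k} = e^{H(\chi) n}$, and on the orbit $|\hat{1_S}(\chi)|$ is constant and bounded by Theorem~\ref{square-root-cancellation}. Cubing that bound and summing over the orbit gives
\[
  \sum_{\text{orbit}} |\hat{1_S}(\chi)|^3 \leq e^{H(\chi)n} \binom{n+k-1}{k-1}^{3/2} e^{-3H(\chi)n/2} \(\frac{n!}{n^n}\)^3 \leq \binom{n+k-1}{k-1}^{3/2} e^{-Hn/2} \(\frac{n!}{n^n}\)^3.
\]
The proof of Lemma~\ref{orbit-count} shows two things I would use: first, the number of orbits of entropy at most $2H$ is bounded by $\exp(O(Hn \log\log n/\log n))$; and second, every such orbit has $k \leq O(Hn/\log n)$ distinct coordinate values. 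Plugging the latter into the elementary estimate $\binom{n+k-1}{k-1} \leq (e(n+k)/k)^k$, together with $\log H^{-1} \leq 100\log\log n$ (which is where the hypothesis $H \geq H_0$ enters), gives
\[
  \binom{n+k-1}{k-1}^{3/2} \leq \exp\(O\(\frac{Hn \log\log n}{\log n}\)\).
\]

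Assembling the three ingredients, the contribution from orbits with $H(\chi) \in [H, 2H]$ is at most
\[
  \exp\(O\(\frac{Hn\log\log n}{\log n}\) - \frac{Hn}{2}\) \(\frac{n!}{n^n}\)^3 \leq \exp(-Hn/4) \(\frac{n!}{n^n}\)^3
\]
for large $n$, since $Hn\log\log n/\log n = o(Hn)$. Summing this bound over the dyadic values $H = 2^j H_0$ yields a geometric series in $j$ dominated by its first term, so the total is at most $\exp(-cnH_0) (n!/n^n)^3 = \exp(-cn(\log n)^{-100})(n!/n^n)^3$, which is the claim.

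The main technical obstacle is verifying that the $\binom{n+k-1}{k-1}^{3/2}$ factor from square-root cancellation really is negligible compared to the gain $e^{-Hn/2}$ obtained from the combination of orbit size against the $\binom{n}{a_1,\dots,a_k}^{-3/2}$ term. This is precisely the point where the entropy lower bound $H \geq (\log n)^{-100}$ is needed: it ensures $\log H^{-1}$ is only polylogarithmic, so that both the orbit-count and the $k$-dependent factor contribute $o(Hn)$ in the exponent, while $Hn/2$ is strictly linear in $Hn$.
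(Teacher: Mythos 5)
Your proposal is correct and follows essentially the same route as the paper: square-root cancellation (Theorem~\ref{square-root-cancellation}) cubed against the orbit size $e^{H(\chi)n}$, the bound $k\leq O(Hn/\log n)$ to make the $\binom{n+k-1}{k-1}$ factor $e^{o(Hn)}$, the orbit count from Lemma~\ref{orbit-count}, and a dyadic decomposition in $H$ with the first piece dominating. The only quibble is attribution: the bound $k\leq O(Hn/\log n)$ is established in the proof of Lemma~\ref{entropy} (the paper cites it from there), though it can also be extracted from the proof of Lemma~\ref{orbit-count} as you suggest.
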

\begin{proof}
Suppose $\chi\in\hat{G}^n$ is a character of entropy at least $(\log n)^{-100}$. By Theorem~\ref{square-root-cancellation} we have
\[
  |\hat{1_S}(\chi)| \leq \binom{n+k-1}{k-1}^{1/2} e^{-Hn/2} \frac{n!}{n^n},
\]
where $k$ is the number of distinct coordinates of $\chi$. We saw in the proof of Lemma~\ref{entropy} that $k \leq O(H n / \log n)$. Thus
\[
  \binom{n+k-1}{k-1} \leq \exp\(O\(\frac{H n \log \log n}{\log n}\)\) = \exp(o(Hn)).
\]
We therefore have
\[
  \sum_{H(\chi) \geq (\log n)^{-100}} |\hat{1_S}(\chi)|^3
  \leq \sum_{H(\chi)\geq (\log n)^{-100}} e^{-3H(\chi)n/2 + o(H(\chi)n)} \(\frac{n!}{n^n}\)^3.
\]
Thus by applying Lemma~\ref{orbit-count} and a dyadic decomposition on $H$ we have
\begin{align*}
  \sum_{H(\chi) \geq (\log n)^{-100}} |\hat{1_S}(\chi)|^3
  &\leq \sum_{\substack{(\log n)^{-100} \leq H \leq \log n\\H~\text{dyadic}}} e^{-Hn/2 + o(Hn)} \(\frac{n!}{n^n}\)^3\\
  &\leq O(\log\log n) e^{-c n (\log n)^{-100}} \(\frac{n!}{n^n}\)^3.\qedhere
\end{align*}
\end{proof}

\section{Proofs of the main theorems}
\label{sec:main-theorem}

We have now assembled all the tools we need to prove our main theorems. We closely follow the outline from Section~\ref{sec:recap}.

\begin{proof}[Proof of Theorem~\ref{main-f}]
As explained in Section~\ref{sec:recap}, the number of solutions to $\pi_1+\pi_2+\pi_3 = f$ is
\[
  n^{2n} \sum_{\chi\in\hat{G}^n} \hat{1_S}(\chi)^3 \chi(f),
\]
and we have the approximation
\begin{align}
  \sum_{\chi\in\hat{G}^n} \hat{1_S}(\chi)^3 \chi(f)
  &= n \sum_{m=0}^M \sum_{m\text{-sparse}~\chi} \hat{1_S}(\chi)^3\chi(f)\nonumber\\
  &\qquad\qquad + O\(n \sum_{m=M+1}^{2\epsilon n} \sum_{m\text{-sparse}~\chi} |\hat{1_S}(\chi)|^3 \)\nonumber\\
  &\qquad\qquad + O\(\sum_{H(\chi)>\epsilon} |\hat{1_S}(\chi)|^3\),\label{decomposition-repeat}
\end{align}
provided that $\sum_{i=1}^n f(i) = \Sigma G$. Put $\epsilon = (\log n)^{-100}$. As long as $M\geq M_0$, Corollary~\ref{low-entropy} implies that the second term of \eqref{decomposition-repeat} has size
\[
  O\( \eta^M n \(\frac{n!}{n^n}\)^3\) + O\( e^{-cn^{1/2}} \(\frac{n!}{n^n}\)^3\),
\]
while Theorem~\ref{med-to-high-entropy} implies that the last term has size
\[
  O\( e^{-cn(\log n)^{-100}} \(\frac{n!}{n^n}\)^3\).
\]
Meanwhile, by Theorem~\ref{major-arcs-replacement} we have
\begin{align*}
  \sum_{m=0}^M \sum_{m\text{-sparse}~\chi} \hat{1_S}(\chi)^3 \chi(f)
  &= \sum_{\substack{m \leq M\\m~\text{even}}} \frac1{(m/2)!} \(-\frac1{2n^2} \sum_{x\in G} |f^{-1}(x)|^2\)^{m/2} \(\frac{n!}{n^n}\)^3\\
  &\qquad\qquad + O_M\(\frac1n \(\frac{n!}{n^n}\)^3\).
\end{align*}
Thus if $M\to\infty$ sufficiently slowly as a function of $n$ then we have
\[
  \sum_{m=0}^M \sum_{m\text{-sparse}~\chi} \hat{1_S}(\chi)^3 \chi(f) = \(\exp\(-\frac1{2n^2} \sum_{x\in G} |f^{-1}(x)|^2\) + o(1)\) \(\frac{n!}{n^n}\)^3.
\]
This proves the theorem.
\end{proof}

The proof of Theorem~\ref{thm:main-4} is similar, but a little easier because we don't need the careful major arc calculations.

\begin{proof}[Proof of Theorem~\ref{thm:main-4}]
The number of solutions to
\[
  \pi_1 + \cdots + \pi_d = f
\]
is
\[
  n^{(d-1)n} 1_S^{*d}(f) = n^{(d-1)n} \sum_{\chi\in\hat{G}^n} \hat{1_S}(\chi)^d \chi(f).
\]
Let $\hat{G}_d\subset \hat{G}^n$ be the set of characters of the form $\chi = (\chi_0^n)$. Then
\[
  \sum_{\chi\in\hat{G}_d} \hat{1_S}(\chi)^d \chi(f) = \sum_{\chi_0\in\hat{G}} \hat{1_S}(\chi_0^n)^d \chi_0(\tx\sum f) = \sum_{\chi_0 \in \hat{G}} \chi_0(d\Sigma G + \Sigma f) \(\frac{n!}{n^n}\)^d.
\]
Assuming $\sum_{i=1}^n f(i) = d \Sigma G$, this is $n(n!/n^n)^d$. 

On the other hand, for $d\geq 4$ we have
\[
  \sum_{\chi\notin\hat{G}_d} |\hat{1_S}(\chi)|^d \leq \max_{\chi\notin\hat{G}_d} |\hat{1_S}(\chi)|^{d-3} \sum_{\chi\in\hat{G}^n} |\hat{1_S}(\chi)|^3.
\]
It follows from Lemma~\ref{elementary-bound} and shift-invariance of $|\hat{1_S}(\chi)|$ (see \eqref{shift-invariance}) that
\[
  \max_{\chi\notin\hat{G}_d} |\hat{1_S}(\chi)| \leq \binom{n}{2}^{-1/2} \frac{n!}{n^n} = O\(\frac1n \(\frac{n!}{n^n}\)\).
\]
Meanwhile we can decompose the sum
\[
  \sum_{\chi\in\hat{G}^n} |\hat{1_S}(\chi)|^3.
\]
as
\begin{align*}
 O\( n \sum_{m=0}^{M} \sum_{m\text{-sparse}~\chi} |\hat{1_S}(\chi)|^3\)
&+ 
O\( n \sum_{m=M+1}^{2\epsilon n} \sum_{m\text{-sparse}~\chi} |\hat{1_S}(\chi)|^3\)\\
&\qquad + O\( \sum_{H(\chi)>\epsilon} |\hat{1_S}(\chi)|^3\).
\end{align*}
The second two terms are bound exactly as in the previous proof (i.e., by the second part of Corollary~\ref{low-entropy} and Theorem~\ref{high-entropy bound}), while the first can be bound using the first part of Corollary~\ref{low-entropy}. The result is that\footnote{In fact a finer analysis using major arc calculations proves that
\[
  \sum_{\chi\in\hat{G}^n} |\hat{1_S}(\chi)|^3 = (e^{1/2} + o(1))n \(\frac{n!}{n^n}\)^3,
\]
but we don't need this.}
\[
  \sum_{\chi\in\hat{G}^n} |\hat{1_S}(\chi)|^3 \leq O\(n \(\frac{n!}{n^n}\)^3\).
\]
Thus
\[
  \max_{\chi\notin\hat{G}_d} |\hat{1_S}(\chi)|^{d-3} \sum_{\chi\in\hat{G}^n} |\hat{1_S}(\chi)|^3 \leq O_d\( n^{4-d} \(\frac{n!}{n^n}\)\),
\]
and the theorem is proved.
\end{proof}

Finally we prove Theorem~\ref{thm:L2}.

\begin{proof}[Proof of Theorem~\ref{thm:L2}]
We have
\begin{equation}\label{nmL2}
  \norm{\(\frac{n^m(n-m)!}{n!}\)^2 1_{S_m} * 1_{S_m} - 1}_2^2 = \(\frac{n^m(n-m)!}{n!}\)^4 \sum_{\substack{\chi\in\hat{G}^m\\\chi\neq0}} |\hat{1_{S_m}}(\chi)|^4.
\end{equation}
The Fourier transform of $1_{S_m}$ is related to that of $1_S$ by
\[
  \hat{1_{S_m}}(\chi_1,\dots,\chi_m)
   = \frac{n^{n-m}}{(n-m)!} \hat{1_S}(\chi_1,\dots,\chi_m,0^{n-m}).
\]
Thus we can expand \eqref{nmL2} as
\begin{equation}\nonumber 
  \(\frac{n^m(n-m)!}{n!}\)^4 \sum_{\substack{\chi\in\hat{G}^m\\ \chi\neq 0}} |\hat{1_{S_m}}(\chi)|^4
  = \(\frac{n^n}{n!}\)^4 \sum_{\substack{\chi\in\hat{G}^m\\ \chi\neq 0}} |\hat{1_S}(\chi_1,\dots,\chi_m,0^{n-m})|^4.
\end{equation}
Now it is easy to see (say by combining Lemma~\ref{elementary-bound} with some basic major arc calculations) that we have
\[
  |\hat{1_S}(\chi)| \leq O\(\frac1{n^3} \(\frac{n!}{n^n}\)\)
\]
unless $\chi$ is, up to translation, at most $4$-sparse. Let $X$ be the set of these $\chi$. Then
\[
  \sum_{\chi\notin X} |\hat{1_S}(\chi)|^4 \leq O\(\frac1{n^4}\(\frac{n!}{n^n}\)\) \sum_\chi |\hat{1_S}(\chi)|^3 \leq O\(\frac1{n^3} \(\frac{n!}{n^n}\)^4\),
\]
since as we saw in the previous proof we have $\sum_\chi |\hat{1_S}(\chi)|^3 \leq O(n(n!/n^n)^3)$. Finally, it is straightforward to calculate that, as long as $m\leq n-1$, we have
\[
  \(\frac{n^n}{n!}\)^4 \sum_{\substack{\chi\in\hat{G}^m\\(\chi,0^{n-m})\in X, \chi\neq 0}} |\hat{1_S}(\chi_1,\dots,\chi_m,0^{n-m})|^4 = O(m^2/n^3),
\]
the dominant contribution coming from $2$-sparse characters, and this proves the claim. 

The very last part of the theorem, about total variation distance, holds because the $L^2$ distance bounds the $L^1$ distance, and the $L^1$ distance is exactly twice the total variation distance.
\end{proof}

\section{Transversals in Latin hypercubes}
\label{sec:latin-cubes}

A \emph{Latin square of order $n$} is an $n \times n$ grid populated with the symbols $\{1,\dots,n\}$ in such a way that no symbol occurs more than once in any given row or column. A \emph{transversal} in a Latin square is a selection of $n$ entries not repeating any row, column, or symbol. See Figure \ref{latin-square} for an example of a transversal in a particular Latin square of order $7$. See Wanless~\cite{wanless} for an extensive survey about transversals in Latin squares.

\begin{figure}[h]
\begin{tabular}{ccccccc}
\circled{1} & 2 & 3 & 4 & 5 & 6 & 7 \\
2 & 6 & 1 & \circled{7} & 3 & 5 & 4 \\
6 & 5 & 7 & 2 & \circled{4} & 1 & 3 \\
5 & 4 & \circled{6} & 3 & 2 & 7 & 1 \\
4 & 7 & 2 & 6 & 1 & 3 & \circled{5} \\
3 & 1 & 4 & 5 & 7 & \circled{2} & 6 \\
7 & \circled{3} & 5 & 1 & 6 & 4 & 2
\end{tabular}
\caption{A Latin square and a transversal}\label{latin-square}
\end{figure}

More generally, a \emph{Latin (hyper)cube of dimension $d$ and order $n$} is a $d$-dimensional grid of side length $n$ filled out with $\{1,\dots,n\}$ in such a way that no symbol occurs more than once in the same (axis-aligned) line. A \emph{transversal} in a $d$-dimensional Latin cube is a selection of $n$ entries not repeating any hyperplane or symbol.

Now let $G$ be an abelian group of order $n$. We claim that counting solutions to $\pi_1 + \cdots + \pi_d = \pi_{d+1}$ in $G$ is equivalent to counting transversals in a certain $d$-dimensional Latin cube. Let $\pi:\{1,\dots,n\}\to G$ be a bijection. The \emph{Latin cube $L^d(G,\pi)$ induced by $G$ and $\pi$} is the $d$-dimensional grid of side length $n$ whose $(i_1,\dots,i_d)$-entry is
\begin{equation}\label{LGpidef}
  x = \pi^{-1}\(\pi(i_1) + \cdots + \pi(i_d)\).
\end{equation}
Clearly this defines a Latin cube: if we alter any one of the $i_j$ while keeping the others fixed then by the group property we must alter $x$. One can think of $L^2(G,\pi)$ as the multiplication (or addition) table of $G$, and likewise $L^d(G,\pi)$ as the graph of the $d$-wise iterated group operation.

\begin{lemma}
Let $S \subset G^n$ be the set of bijections $\pi:\{1,\dots,n\}\to G$. The transversals of $L^d(G,\pi)$ are in bijection with the solutions to
\begin{equation}\label{pi-sum}
  \pi_1 + \cdots + \pi_d = \pi \qquad(\pi_1,\dots,\pi_d\in S).
\end{equation}
In particular $T(L^d(G,\pi)) n!$ is the number of solutions to
\begin{equation}\label{pi-sum-2}
  \pi_1 + \cdots + \pi_d = \pi_{d+1} \qquad(\pi_1,\dots,\pi_{d+1}\in S).
\end{equation}
\end{lemma}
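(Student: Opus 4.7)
My plan is to set up an explicit bijection for the first assertion and then deduce the second by a relabeling symmetry.

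For the first part, I would unpack the definition of a transversal. A transversal of $L^d(G,\pi)$ is a set of $n$ cells $(i_1^{(k)},\dots,i_d^{(k)})$, $k=1,\dots,n$, whose projections along each of the $d$ axes form a permutation of $\{1,\dots,n\}$, and whose symbols
\[
  v^{(k)} = \pi^{-1}\bigl(\pi(i_1^{(k)}) + \cdots + \pi(i_d^{(k)})\bigr)
\]
likewise form a permutation of $\{1,\dots,n\}$. By enumerating the cells in order of their symbols I may assume $v^{(k)}=k$, which is to say $\pi(i_1^{(k)}) + \cdots + \pi(i_d^{(k)}) = \pi(k)$ for each $k$. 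Setting $\pi_j(k) := \pi(i_j^{(k)})$ then produces $d$ bijections in $S$ satisfying $\pi_1+\cdots+\pi_d=\pi$. Conversely, given such $(\pi_1,\dots,\pi_d)$, the cells $(\pi^{-1}(\pi_1(k)),\dots,\pi^{-1}(\pi_d(k)))$, $k=1,\dots,n$, form a transversal: each axis projection is a permutation since $\pi^{-1}\circ\pi_j$ is a bijection, and the symbol at cell $k$ is $\pi^{-1}(\pi_1(k)+\cdots+\pi_d(k))=k$. These two assignments are inverse to each other, giving the claimed bijection.

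For the second assertion, I would sum the first part over $\pi_{d+1}\in S$, which reduces matters to showing that $T(L^d(G,\pi'))$ does not depend on $\pi'\in S$. Writing $\tau = \pi^{-1}\circ\pi'\in\S_n$, a direct computation from \eqref{LGpidef} yields
\[
  L^d(G,\pi')(i_1,\dots,i_d) = \tau^{-1}\bigl(L^d(G,\pi)(\tau(i_1),\dots,\tau(i_d))\bigr),
\]
so $L^d(G,\pi')$ is obtained from $L^d(G,\pi)$ by applying $\tau$ to each coordinate axis and $\tau^{-1}$ to the symbols. This is manifestly an isomorphism of Latin cubes, so every term in the sum equals $T(L^d(G,\pi))$ and the total is $n!\,T(L^d(G,\pi))$, as required.

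The only real obstacle here is bookkeeping: there are three copies of $\{1,\dots,n\}$ in play (the $d$ axis labels and the symbol alphabet) along with the group $G$, and it is easy to lose track of which map goes between which sets. Fixing $\pi$ as the dictionary between $\{1,\dots,n\}$ and $G$ once at the outset should make the verifications mechanical; there is no analytic content.
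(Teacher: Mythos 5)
Your argument is correct. The first part is essentially the paper's own proof: indexing the cells of a transversal by their symbols and reading off $\pi_j(k)=\pi(i_j^{(k)})$ is exactly the correspondence the paper uses, with the same two verifications (the axis condition gives bijectivity of the $\pi_j$, and the defining formula \eqref{LGpidef} gives $\pi_1+\cdots+\pi_d=\pi$), and your inverse map matches the paper's. For the second part you take a slightly different route: you sum the first part over $\pi_{d+1}\in S$ and then prove that $T(L^d(G,\pi'))$ is independent of $\pi'$ by exhibiting the isomorphism $L^d(G,\pi')(i_1,\dots,i_d)=\tau^{-1}\bigl(L^d(G,\pi)(\tau(i_1),\dots,\tau(i_d))\bigr)$ with $\tau=\pi^{-1}\circ\pi'$ (the computation checks out, since $\pi\circ\tau=\pi'$). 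The paper instead relabels the solutions directly, observing that $\pi_1+\cdots+\pi_d=\pi$ if and only if $\pi_1\pi^{-1}\pi_{d+1}+\cdots+\pi_d\pi^{-1}\pi_{d+1}=\pi_{d+1}$, so that for each fixed $\pi_{d+1}$ the solution sets of \eqref{pi-sum} and \eqref{pi-sum-2} biject. These are the same relabeling $\tau=\pi^{-1}\pi_{d+1}$ applied at different levels: the paper's version is a one-line algebraic identity on solution sets, while yours makes the underlying isotopy-invariance of the transversal count explicit, which is marginally longer but perhaps more transparent about why the count cannot depend on the choice of $\pi$. Either way the factor $n!$ comes out identically.
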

\begin{proof}
Fix a transversal in $L^d(G,\pi)$. For each symbol $x\in\{1,\dots,n\}$ let
\begin{equation}\label{transversalx}
  (\pi^{-1}\pi_1(x),\dots,\pi^{-1}\pi_d(x))
\end{equation}
be the coordinates of the symbol $x$ in the transversal. Since each symbol occurs exactly once in a transversal, $\pi_1,\dots,\pi_d$ are well defined functions $\{1,\dots,n\} \to G$. The hyperplane condition for transversals implies that $\pi_1,\dots,\pi_d$ are injections, hence bijections. Finally, the definition \eqref{LGpidef} of $L^d(G,\pi)$ implies \eqref{pi-sum}.

Conversely, given a solution $(\pi_1,\dots,\pi_d)$ to \eqref{pi-sum} with $\pi_1,\dots,\pi_d \in S$, consider the collection of entries of $L^d(G,\pi)$ indexed by \eqref{transversalx}. The condition \eqref{pi-sum} ensures that each symbol is represented exactly once (since $x$ is in position \eqref{transversalx}), and the condition that $\pi_1,\dots,\pi_d \in S$ ensures that no two entries occur in the same hyperplane.

For the last part of the lemma, just note that
\[
  \pi_1 + \cdots + \pi_d = \pi
\]
if and only if
\[
  \pi_1 \pi^{-1} \pi_{d+1} + \cdots + \pi_d \pi^{-1} \pi_{d+1} = \pi_{d+1}.
\]
Thus every solution to \eqref{pi-sum} corresponds to $n!$ solutions to \eqref{pi-sum-2}.
\end{proof}

Thus by applying Theorems~\ref{main-even} and~\ref{thm:main-4} we have the following.

\begin{theorem}\label{transversal-corollary}
Let $G$ be an abelian group of order $n$. Then the number of transversals in $L^d(G,\pi)$ is
\[
\begin{cases}
(e^{-1/2} + o(1)) n!^2/n^{n-1} & \text{if}~d=2~\text{and}~\Sigma G = 0,\\
(1 + O_d(n^{2-d})) n!^d/n^{n-1} & \text{if}~d\geq 3~\text{and}~(d+1)\Sigma G = 0,\\
0 & \text{otherwise}.
\end{cases}
\]
\end{theorem}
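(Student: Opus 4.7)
The plan is to combine the preceding lemma (which identifies $T(L^d(G,\pi))\,n!$ with the number of tuples $(\pi_1,\dots,\pi_{d+1})\in S^{d+1}$ satisfying $\pi_1+\cdots+\pi_d=\pi_{d+1}$) with the main theorems already proved in earlier sections. The whole argument is essentially bookkeeping, distributed over three cases.

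I would first dispose of the vanishing case. If a single solution to $\pi_1+\cdots+\pi_d=\pi_{d+1}$ with all $\pi_i\in S$ exists, then summing over the $n$ input coordinates yields $d\,\Sigma G=\Sigma G$, i.e.\ $(d-1)\Sigma G=0$. Because $\Sigma G$ is always $2$-torsion (it is either $0$ or the unique element of order $2$), we have $(d-1)\Sigma G=(d+1)\Sigma G$, so this condition is equivalent to the hypothesis $(d+1)\Sigma G=0$. Whenever $(d+1)\Sigma G\neq 0$ we therefore obtain $T(L^d(G,\pi))=0$, giving the third case of the theorem.

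For $d=2$ the count of solutions to $\pi_1+\pi_2=\pi_3$ is exactly the number of additive triples in $S$, so Theorem~\ref{main-even} applies directly (the hypothesis $\Sigma G=0$ matches) and gives $(e^{-1/2}+o(1))n!^3/n^{n-1}$ triples; dividing by $n!$ yields the first case. For $d\geq3$ I would fix $\pi_{d+1}$ and apply a counting theorem to $f:=\pi_{d+1}$. Since $\pi_{d+1}$ is a bijection we have $\sum_{i=1}^n f(i)=\Sigma G$ and $\sum_{x\in G}|f^{-1}(x)|^2=n$. When $d=3$, Theorem~\ref{main-f} applies and the singular series evaluates to
\[
  \S(f)=\exp\!\(-\tfrac{1}{2n}\)=1+O(n^{-1}),
\]
so the number of solutions with $\pi_4$ fixed is $(1+O(n^{-1}))n!^3/n^{n-1}$. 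When $d\geq 4$, the hypothesis $\sum f(i)=d\,\Sigma G$ of Theorem~\ref{thm:main-4} becomes $\Sigma G=d\,\Sigma G$, i.e.\ $(d-1)\Sigma G=0$, which is again equivalent to our standing hypothesis; the theorem then delivers $(1+O_d(n^{3-d}))n!^d/n^{n-1}$ solutions with $\pi_{d+1}$ fixed.

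Finally I would sum over the $n!$ choices of $\pi_{d+1}$ and divide by $n!$ (as prescribed by the lemma), obtaining $(1+O(n^{-1}))n!^3/n^{n-1}$ when $d=3$ and $(1+O_d(n^{3-d}))n!^d/n^{n-1}$ when $d\geq4$. Both fit uniformly into the second case of the theorem with error $O_d(n^{2-d})$. There is no genuine obstacle to surmount here; the only subtlety is noticing that the $2$-torsion of $\Sigma G$ makes $(d-1)\Sigma G=0$ and $(d+1)\Sigma G=0$ interchangeable, so that the natural necessary condition and the stated hypothesis agree.
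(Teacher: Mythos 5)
Your reduction, the $d=2$ case, and the observation that $2\Sigma G=0$ makes $(d-1)\Sigma G=0$ and $(d+1)\Sigma G=0$ interchangeable are all fine, but for $d\geq 3$ the quantitative error terms you claim do not follow from the theorems you invoke. For $d=3$ you fix $\pi_4$ and apply Theorem~\ref{main-f} with $f=\pi_4$; that theorem only gives $(\S(f)+o(1))\,n!^3/n^{n-1}$ with an \emph{unquantified} $o(1)$, so although $\S(\pi_4)=e^{-1/(2n)}=1+O(1/n)$, you only obtain $(1+o(1))$, not the stated $(1+O(n^{-1}))$. For $d\geq 4$ you fix $\pi_{d+1}$ and apply Theorem~\ref{thm:main-4} with $f=\pi_{d+1}$, which yields error $O_d(n^{3-d})$; this is weaker by a factor of $n$ than the theorem's claimed $O_d(n^{2-d})$, so it does not ``fit uniformly into the second case'' as you assert. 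In both ranges the loss comes from the same decision: by freezing $\pi_{d+1}$ you work with only $d$ summands.

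The intended argument -- which is what the hypothesis being phrased as $(d+1)\Sigma G=0$ is signalling -- keeps all $d+1$ permutations free. By the lemma, $T(L^d(G,\pi))\,n!$ equals the number of solutions to $\pi_1+\cdots+\pi_d=\pi_{d+1}$ in $S^{d+1}$, and since $\pi_{d+1}\mapsto-\pi_{d+1}$ permutes $S$, this is the number of solutions to $\pi_1+\cdots+\pi_{d+1}=0$. Now apply Theorem~\ref{thm:main-4} with $d+1\geq 4$ summands and $f=0$, whose hypothesis is exactly $0=(d+1)\Sigma G$; it gives $(1+O_{d+1}(n^{3-(d+1)}))\,n!^{d+1}/n^{n-1}=(1+O_d(n^{2-d}))\,n!^{d+1}/n^{n-1}$, and dividing by $n!$ yields the second case of the theorem. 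The extra free permutation is precisely what buys the additional factor of $1/n$ in the error, and it also handles $d=3$, where a three-summand application of Theorem~\ref{thm:main-4} is unavailable and Theorem~\ref{main-f} is too weak quantitatively.
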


For context in the Latin squares literature, let $T(d,n)$ be the maximum number of transversals in a Latin cube of dimension $d$ and order $n$. Taranenko~\cite[Theorem~6.1]{taranenko} proved that
\[
  T(d,n) \leq \((1+o(1)) \frac{n^{d-1}}{e^d}\)^n.
\]
A matching lower bound was proved by Glebov and Luria~\cite[Theorems~1.3 and 4.1]{glebovluria} using a random construction: they proved that
\[
  T(d,n) \geq \((1 - o(1))\frac{n^{d-1}}{e^d}\)^n
\]
for all $n$ when $d=2$, and for infinitely many $n$ for each $d\geq 3$. Theorem~\ref{transversal-corollary} implies a more precise lower bound for $T(d,n)$, with an explicit Latin cube, whenever $d$ is odd or $n \not\equiv 2\pmod 4$. It is an amusing challenge to cover the remaining cases, but not one we entertain here.

\section{PRP-to-PRF conversion}
\label{sec:cryptography}

Two fundamental building blocks in cryptography are so-called \emph{pseudorandom functions} (PRFs) and \emph{pseudorandom permutations} (PRPs). Roughly speaking, a pseudorandom function is a family of functions $f_s:\{0,1\}^d \to \{0,1\}^d$ such that
\begin{itemize}
  \item for any $s$ and $x\in\{0,1\}^d$ there is an efficient algorithm for computing $f_s(x)$,
  \item if $s$ is chosen uniformly at random then it is not computationally practical to distinguish $f_s$ from a truly random function $\{0,1\}^d \to \{0,1\}^d$, with any statistical significance.
\end{itemize}
Similarly, a pseudorandom permutation is a family of permutation $f_s:\{0,1\}^d \to \{0,1\}^d$ such that
\begin{itemize}
  \item for any $s$ and $x\in\{0,1\}^d$ there is an efficient algorithm for computing $f_s(x)$,
  \item if $s$ is chosen uniformly at random then it is not computationally practical to distinguish $f_s$ from a truly random permutation $\{0,1\}^d \to \{0,1\}^d$, with any statistical significance.
\end{itemize}
Usually the domain of $s$ is taken to be $\{0,1\}^k$ for some $k$. In both cases we will write $F:\{0,1\}^k\times \{0,1\}^d \to \{0,1\}^d$ for the function defined by $F(s,x) = f_s(x)$, and we will refer to $F$ itself as the PRF/PRP. Both concepts are treated as cryptographic primitives, from which more complex cryptographic constructions are built, but they have somewhat different particular use cases. For more detail and background, see Goldreich~\cite[Chapter 3]{goldreich}.

The security of a PRF or a PRP is measured by a hypothetical attacker's \emph{advantage}, which is defined as follows. Suppose $A$ is some probabilistic algorithm which, given a function $f:\{0,1\}^d \to \{0,1\}^d$, outputs either $A(f) = 0$ or $A(f)=1$. Then the \emph{advantage} of $A$ as a PRF distinguisher is
\[
  \text{Adv}_F^\textup{PRF}(A) = |\P(A(f_s) = 1) - \P(A(f) = 1)|,
\]
where $s$ is drawn uniformly from $\{0,1\}^k$ and $f$ is an a genuinely uniform random function. The advantage to an attacker with access to at most $m$ queries is then defined as
\[
  \text{Adv}_F^\textup{PRF}(m) = \max \text{Adv}_F^\textup{PRF}(A),
\]
where the maximum is taken over all probabilitic algorithms $A$ which query at most $m$ values $f(x)$, where the places $x$ being queried may depend on previous values (an ``adaptive chosen plaintext attack''). Usually some constraint is also placed on the computational power of the attacker, but we can mostly focus on the number of queries. Advantage for PRP attackers is defined similarly.

Given a good PRF generator, a corresponding PRP generator can be constructed using the so-called Feistel cipher. The security of this construction has been extensively studied, starting with the seminal result of Luby and Rackoff~\cite{luby-rackoff}, which states that 3 rounds of the Feistel cipher are sufficient to guarantee security. More speficially, Luby and Rackoff proved that if a 3-round Feistel cipher is combined with an ideal PRF generator then
\[
  \text{Adv}_F^\text{PRP}(m) \leq m^2/2^{d/2}.
\]
Moreover we get stronger bounds if more rounds are used: see Patarin~\cite{patarin-LR}.

Conversion in the other direction is less well studied, partly because it often suffices just to consider a PRP itself as a PRF. Specifically, if we attempt to use a PRP generator $F$ itself as a PRF, then we have
\[
  \text{Adv}_F^\text{PRF}(m) \leq  \text{Adv}_F^\text{PRP}(m) +  m^2/2^d.
\]
However, such a construction is vulnerable to the so-called \emph{birthday attack}: one expects to have to query only about $2^{d/2}$ times before seeing a collision, so roughly $2^{d/2}$ queries should suffice to distinguish a PRP from a PRF. This attack shows that the above bound is close to sharp. We could overcome this problem by, say, doubling the size of $d$, but this comes at a cost in efficiency.

An alternative construction is to take two independent PRPs and use their bitwise xor as our PRF. In other words we take two independent pseudorandom permutations $\tilde{\pi}_1,\tilde{\pi}_2$ of $\F_2^d$ and we propose $\tilde{\pi}_1+\tilde{\pi}_2$ as a pseudorandom function. This was first analyzed by Bellare and Impagliazzo~\cite{bellare-impagliazzo} and Lucks~\cite{lucks}, and later by Patarin~\cite{patarin-xor} and others. An optimal security bound for this construction follows from Theorem~\ref{thm:L2}.

\begin{theorem}
Suppose we use the \textup{xor} construction to make a PRF generator $F$ from a PRP generator $E$. Then as long as $m < 2^d$ we have
\[
  \textup{Adv}_F^\text{PRF}(m) \leq 2\textup{Adv}_E^\text{PRP}(m) + O(m/2^{3d/2}).
\]
\end{theorem}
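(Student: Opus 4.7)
The plan is to apply a standard cryptographic hybrid argument to reduce to an information-theoretic distinguishing problem and then invoke Theorem~\ref{thm:L2}. Let $\pi_1, \pi_2$ denote independent uniform random permutations of $\F_2^d$ and $f$ a uniform random function $\F_2^d \to \F_2^d$, and write $\textup{Adv}^*(m)$ for the maximum advantage of an $m$-query adaptive (computationally unbounded) adversary at distinguishing oracle access to $\pi_1 + \pi_2$ from oracle access to $f$. The target decomposition is
\[
  \textup{Adv}_F^\textup{PRF}(m) \leq 2\,\textup{Adv}_E^\textup{PRP}(m) + \textup{Adv}^*(m),
\]
followed by the bound $\textup{Adv}^*(m) = O(m/2^{3d/2})$.

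For the first step I would use the two-stage hybrid passing from $(E(s_1,\cdot), E(s_2,\cdot))$ through $(\pi_1, E(s_2,\cdot))$ to $(\pi_1, \pi_2)$. Any distinguisher between two consecutive worlds in this chain can be converted into a PRP distinguisher that samples the unchanged key itself and, on each query $x$ from the attacker, returns the xor of its own oracle's answer with the simulated value. Each hybrid step therefore costs at most $\textup{Adv}_E^\textup{PRP}(m)$, yielding the claimed factor of $2$.

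For the information-theoretic part, I would assume without loss of generality that the adversary never repeats a query, so its $m$ queries are distinct points $x_1,\dots,x_m \in \F_2^d$. Conditioned on these queries, the joint distribution of responses under $\pi_1+\pi_2$ is the sum of two independent uniform injections $\{x_1,\dots,x_m\}\to\F_2^d$, while under $f$ it is uniform on $(\F_2^d)^m$. For a non-adaptive adversary, Theorem~\ref{thm:L2} with $G=\F_2^d$ and $n=2^d$ (valid since $m<n$) immediately bounds the total variation distance by $O(m/2^{3d/2})$.

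The only mild subtlety, and the step most likely to require care, is the lift from non-adaptive to adaptive adversaries. Here I would invoke the standard observation that the $i$th query is a deterministic function of the adversary's internal randomness and the first $i-1$ responses, so that coupling the two transcripts on this randomness reduces the TV distance between adaptive transcripts to the maximum non-adaptive TV distance over fixed sequences of distinct queries. Combining this with the hybrid decomposition proves the theorem; the real content is all concentrated in Theorem~\ref{thm:L2}, and everything else is routine hybrid bookkeeping.
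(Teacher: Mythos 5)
Your overall route is exactly the paper's: the two-step hybrid replacing each pseudorandom permutation in turn by a truly random one (each step costing at most $\textup{Adv}_E^\textup{PRP}(m)$), followed by bounding the purely information-theoretic distance between $\pi_1+\pi_2$ and a uniform random function via Theorem~\ref{thm:L2}, using that distinguishing advantage is bounded by total variation distance. So structurally there is nothing new to compare; the content is indeed all in Theorem~\ref{thm:L2}.

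The one step you state that is false as a general principle is the adaptive-to-non-adaptive lift: ``couple the internal randomness; then the TV distance between adaptive transcripts is at most the maximum, over fixed sequences of distinct queries, of the TV distance of the answer vectors.'' Counterexample: choose $z\in\F_2^d$ uniformly, set $F(0)=z$, $F(z)=0$, and let $F$ be uniform and independent elsewhere. For every \emph{fixed} pair of distinct queries the answers are $O(2^{-d})$-close to uniform, yet a deterministic adaptive adversary making the two queries $x_1=0$, $x_2=F(0)$ distinguishes $F$ from uniform with advantage $1-O(2^{-d})$. What rescues the argument in the present setting is symmetry rather than a generic coupling: precomposing $(\pi_1,\pi_2)$ with any permutation of the domain preserves their joint law, so the law of the answer vector $\bigl(\pi_1(x_i)+\pi_2(x_i)\bigr)_{i\le m}$ for distinct queries, and in particular the conditional law of each new answer given the earlier ones, does not depend on which fresh point is queried; the same is trivially true in the ideal world. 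Hence for any (wlog deterministic, non-repeating) adaptive strategy the answer-vector distribution coincides with that for a fixed set of $m$ distinct queries in both worlds, and Theorem~\ref{thm:L2} then bounds the transcript TV distance, giving $O(m/2^{3d/2})$. (The paper itself elides this point with the single phrase ``advantage is bounded by total variation distance''; alternatively one can cite Patarin's H-coefficient technique.) With that justification substituted for your coupling claim, your proof is the paper's proof.
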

\begin{proof}
The following diagram may or may not be helpful:
\[
  \tilde\pi_1+\tilde\pi_2 \xleftrightarrow{\textup{Adv}_E^\textup{PRP}(m)} \pi_1 + \tilde\pi_2 \xleftrightarrow{\textup{Adv}_E^\textup{PRP}(m)} \pi_1 + \pi_2 \xleftrightarrow{O(m/2^{3d/2})} f.
\]
We are using $\tilde{\pi}_1 + \tilde{\pi}_2$ as our PRF, where $\tilde{\pi}_1$ and $\tilde{\pi}_2$ are independent draws from our PRP generator. The attacker cannot distinguish this from $\pi_1 + \pi_2$, where $\pi_1$ and $\pi_2$ are truly random permutations $\F_2^d\to\F_2^d$, with advantage greater than $2\textup{Adv}_E^\textup{PRP}(m)$. Since advantage is bounded by total variation distance, Theorem~\ref{thm:L2} implies that the attacker cannot distinguish $\pi_1+\pi_2$ from a truly random function $f$ with advantage greater than $O(m/2^{3d/2})$. Thus the claim follows from the triangle inequality for advantage.
\end{proof}

For a better introduction to PRP-to-PRF convesion, see Bellare, Krovetz, and Rogaway~\cite{bellare-krovetz-rogaway}.

\section*{Acknowledgements}

I am grateful to Freddie Manners and Rudi Mrazovi{\'c} for numerous technical discussions, to Samuel Neves for conversations about cryptography, and to Ian Wanless for conversations about Latin squares.

\bibliography{moreon}
\bibliographystyle{alpha}

\end{document}